\documentclass[12pt]{article}

\usepackage{amssymb}
\usepackage{amsfonts}
\usepackage{theorem}
\usepackage{bbm}
\usepackage{graphicx}
\usepackage[leqno]{amsmath}
\usepackage{enumerate}
\usepackage{indentfirst}
\usepackage{color}

\newtheorem{thm}{Theorem}

\newtheorem{remark}{Remark}
\newtheorem{lmm}{Lemma}[section]

\newenvironment{proof}[1][Proof]{\textbf{#1.} }{\hfill $\square$}

%Glossaire

\newcommand{\R}{\mathbbm{R}}

\newcommand{\al}{\alpha}
\newcommand{\eps}{\varepsilon}

\newcommand{\mE}{\mathbf{E}}

\newcommand{\cL}{\mathcal{L}}

\newcommand{\trace}{\mbox{Tr}}

\newcommand{\aeff}{a^{{\rm eff}}}

\setcounter{footnote}{2}

\title{Higher order homogenization for random non-autonomous parabolic operators}
\author{Marina Kleptsyna\thanks{Le Mans Universit\'e, Laboratoire Manceau de Math\'ematiques, Avenue Olivier Messiaen, 72085 Le Mans, Cedex 9, France.
e-mail: {\tt marina.kleptsyna@univ-lemans.fr, \ alexandre.popier@univ-lemans.fr }}
\,\,\, Andrey Piatnitski \thanks{The Arctic University of Norway, campus Narvik,  P.O.Box 385,
8505 Narvik, Norway\ \ \  and \ \ \   Institute for Information Transmission Problems of RAS, 19, Bolshoy Karetny per., Moscow 127051,
Russia.\ e-mail: {\tt api009@uit.no}}
\,\,\, and Alexandre Popier\footnotemark[3]
}

\date{\today}

\begin{document}
\maketitle

\begin{abstract}
We consider Cauchy problem for a divergence form second order parabolic operator with rapidly oscillating
coefficients that are periodic in spatial variables and random stationary ergodic in time. As was proved
in \cite{JKO_1} and \cite{KP_1995} in this case the homogenized operator is deterministic. The paper focuses on
the diffusion approximation of solutions in the case of non-diffusive scaling, when the oscillation in spatial variables is faster than that in temporal variable. Our goal is to study the asymptotic behaviour of the normalized difference between solutions of the original
and the homogenized problems.
\end{abstract}

\section{Introduction}\label{s_intro}
In this work we consider the asymptotic behaviour of solutions to the following Cauchy problem
\begin{equation}\label{ori_cauch}
\begin{array}{c}
\displaystyle
\frac\partial{\partial t}u^\eps=\mathrm{div}\Big[a\Big(\frac x\eps,\frac t{\eps^\alpha}\Big)\nabla u^\eps\Big]\qquad
\hbox{in }\mathbb R^d\times(0,T]\\[4mm]
u^\eps(x,0)=\varphi(x).
\end{array}
\end{equation}
Here $\eps$ is a small positive parameter that tends to zero, $\alpha$ satisfies the inequality $0<\alpha<2$,
$a(z,s)$ is a positive definite matrix whose entries are periodic in $z$ variable and random stationary ergodic in $s$.

It is known (see \cite{JKO_1, KP_1995}) that this problem admits homogenization and that the homogenized operator
is deterministic and has constant coefficients.  The homogenized Cauchy problem takes the form
\begin{equation}\label{eff_cauch}
\begin{array}{c}
\displaystyle
\frac\partial{\partial t}u^0=\mathrm{div}(\aeff \nabla u^0)\\[3mm]
u^0(x,0)=\varphi(x).
\end{array}
\end{equation}
The formula for the effective matrix $\aeff$ is given in \eqref{def_effema} in Section 2 (see also \cite{KP_1995}).

The goal of this paper is to study the limit behaviour of the difference $u^\eps-u^0$, as $\eps\to0$.

In the existing literature there is a number of works devoted to homogenization of random parabolic problems.
The results obtained in \cite{Ko78} and \cite{PV80} for random divergence form elliptic operators also apply
to the parabolic case. In the presence of large lower order terms the limit dynamics might remain random
and show diffusive or even more complicated behaviour.  The papers \cite{CKP_2001}, \cite{PP}, \cite{KP_3}
focus on the case of time dependent parabolic operators  with periodic in spatial variables and random
in time coefficients. The fully random case has been studied in \cite{PP_1}, \cite{Ba_1}, \cite{Ba_2},
\cite{HPP}.

One of the important aspects of homogenization theory is  estimating the rate of convergence.
For random operators the first estimates have been obtained in \cite{Yu80}. Further important progress in this direction
was achieved in the recent works \cite{GO12}, \cite{GM12}.

Problem (\ref{ori_cauch}) in the case of diffusive scaling $\alpha=2$ was studied in our previous work \cite{KPP_2015}.
It was shown that, under proper mixing conditions, the difference $u^\eps-u^0$ is of order $\eps$, and that
the normalized difference $\eps^{-1}(u^\eps-u^0)$ after subtracting an appropriate corrector, converges in law
to a solution of some limit SPDE.

In the present paper we consider the case $0<\alpha<2$.    In other words, bearing in mind the diffusive scaling,
we assume that the oscillation in spatial variables is faster than that in time. In this case the principal part of
the asymptotics of $u^\eps-u^0$ consists of a finite number of correctors, the oscillating part of each of them being
a solution of an elliptic PDE with periodic in spatial variable coefficients. The number of correctors increases as
$\alpha$ approaches~$2$. After subtracting these correctors,
the resulting expression divided by $\eps^{\alpha/2}$ converges in law to a solution of the limit SPDE.

In contrast with the diffusive scaling, for $\alpha<2$  the interplay between the scalings in spatial variables and time
and the necessity  to construct higher order correctors results in additional regularity assumptions on the coefficients.
Indeed, each corrector is introduced as a solution of some elliptic equation in which time is a parameter, thus
this corrector has the same regularity in time as the coefficients of the equation. When we construct the next term of the expansion,
this corrector is differentiated in time. This differentiation reduces the regularity.
The result mentioned in the previous paragraph holds if the coefficients $a^{ij}(z,s)$ in \eqref{ori_cauch} are smooth enough
functions.

We also consider in the paper the special case of diffusive dependence on time. Namely, we assume in this case
that $a(z,s)={\rm a}(z,\xi_s)$, where $\xi_\cdot$ is a stationary diffusion process in $\mathbb R^n$ and ${\rm a}(z,y)$
is a  periodic in $z$ smooth deterministic function. It should be emphasized that in the said diffusive case
Theorem \ref{t_1} does not apply because the coefficients $a^{ij}$ do not possess the required regularity in time. This lack of regularity leads to additional difficulties in treating the diffusive case. As was shown in our previous work \cite{KPP22}, the statement of Theorem \ref{t_1} remains valid if $\alpha <1$.  Also, for $1\leq \alpha < 2$ in dimension one the issues can be addressed using the equation satisfied by the potential of the discrepancy. This technic fails to work in dimension higher than 1. Here we treat the case $\alpha=1$ in any dimension.

\bigskip

The paper is organized as follows.
\begin{itemize}
\item In Section \ref{s_1} we introduce the studied problem and provide all the assumptions. Then we formulate the main result (Theorem \ref{t_1}) of the paper concerning the smooth case for $\alpha < 2$.

Section \ref{s_sec3} focuses on the proof of Theorem \ref{t_1}. At the beginning we consider a number of auxiliary problems and define the higher order terms of the asymptotics of solution.
\item In Section \ref{s_sec4} we consider the special case of diffusive dependence on time for $\alpha \leq 1$. We extend to the dimension $d$ the result of \cite{KPP22} in Theorem \ref{t_2diff}.

\end{itemize}

 \section{The smooth case}\label{s_1}

In this section we provide all the assumptions on the data of problem \eqref{ori_cauch}, introduce some notations and formulate the main result.

For the studied Cauchy problem \eqref{ori_cauch}, where $\eps$ is a small positive parameter, we assume that the following conditions hold true:
\begin{itemize}
\item [\bf a1.]the matrix $a(z,s)=\{a^{ij}(z,s)\}\big._{i,j=1}^d$ is  symmetric and satisfies uniform ellipticity conditions
$$
\lambda |\zeta|^2\leq a(z,s)\zeta\cdot\zeta\leq \lambda^{-1} |\zeta|^2,\quad \zeta\in\mathbb R^d,\ \ \lambda>0;
$$
\item [\bf a2.] $\varphi\in C_0^\infty(\mathbb R^d)$. In fact, this condition can be essentially relaxed, see Remark \ref{r_regu_ini}.
\end{itemize}

In the first setting it is assumed that the coefficients of matrix $a$ are smooth functions
that have good mixing properties in time variable. The smoothness is important because our approach relies on auxiliary elliptic
equations that depend on time as a parameter, and we have to differentiate these equations w.r.t. time.

\medskip
In the case of smooth coefficients our assumptions read:
\begin{itemize}
\item [\bf h1.]
The coefficients $a^{ij}(z,s)$ are periodic in $z$ with the period $[0,1]^d$ and random stationary ergodic in $s$.
Given a probability space $(\Omega,\mathcal{F},{\bf P})$ with an ergodic dynamical system $\tau_s$, we assume that
$a^{ij}(z,s,\omega)=\mathtt{a}^{ij}(z,\tau_s\omega)$, where $\{\mathtt{a}^{ij}(z,\omega)\}\big._{i,j=1}^d$ is a collection of random periodic in $z$ functions that satisfy the above uniform ellipticity conditions.
\item [\bf h2.] The realizations $a^{ij}(z,s)$ are  smooth. For any  $N\geq 1$ and $k\geq 1$ there exist $C_{N,k}$
such that
$$
{\bf E}\,\|a^{ij}\|^k_{C^N(\mathbb T^d\times[0,T])}\leq C_{N,k};
$$
here and in what follows we identify periodic functions with functions on the torus $\mathbb T^d$, ${\bf E}$ stands for the expectation.
\item [\bf h3.] Mixing condition. The strong mixing coefficient $\gamma(r)$ of $a(z,\cdot)$ satisfies the inequality
$$
\int_0^\infty (\gamma(r))^{1/2}dr<\infty.
$$
\end{itemize}
We say that Condition {\bf (H)} holds if {\bf a1}, {\bf a2} and {\bf h1} -- {\bf h3} are fulfilled.

For the reader's convenience we recall here the definition of strong mixing coefficient.
Let $\mathcal{F}_{\leq s}$ and $\mathcal{F}_{\geq s}$ be the $\sigma$-algebras generated by
$\{a(z,t)\,:\,z\in\mathbb T^d, t\leq s\}$ and $\{a(z,t)\,:\,z\in\mathbb T^d, t\geq s\}$, respectively.
We set
$$
\gamma(r)=\sup \big| \mathbf{P}(A\cap B)-\mathbf{P}(A)\mathbf{P}(B)\big|,
$$
where the supremum is taken over all $A\in \mathcal{F}_{\leq 0}$ and $B\in \mathcal{F}_{\geq r}$.

\subsection{Homogenized problem and first corrector}

According to \cite{KP_1995}, under {\bf (H)}, %or {\bf (C)},
the sequence $u^\eps$ converges in probability, as $\eps\to0$, to a solution $u^0$ of problem \eqref{eff_cauch}. For the reader convenience we provide here the definition of the effective matrix $\aeff$. %If {\bf (H)} holds,
We solve the following auxiliary problem
\begin{equation}\label{aux_0}
\mathrm{div}\big(a(z,s,\omega)\nabla \chi^0(z,s,\omega)\big)=-\mathrm{div}\, a(z,s,\omega),\quad z\in\mathbb T^d;
\end{equation}
 here $s$ and $\omega$ are parameters, and $\chi^0$ is an unknown vector function: $ \chi^0=(\chi^{0,1},\ldots,\chi^{0,d})$.
 In what follows {\it we usually do not indicate explicitly the dependence of $\omega$}.
Due to ellipticity of the matrix $a$  equation \eqref{aux_0} has a unique, up to an additive constant vector, periodic solution,
$ \chi^0\in \big(L^\infty(\mathbb T^d)\cap H^1(\mathbb T^d)\big)^d $.
 This constant vector is chosen in such a way that
 \begin{equation}\label{norm_cond0}
 \int_{\mathbb T^d}\chi^0(z,s)\,dz=0\qquad\hbox{for all } s \ \hbox{and } \omega.
 \end{equation}
Then we define the effective matrix $\aeff$ by
\begin{equation}\label{def_effema}
\aeff={\bf E}\int_{\mathbb T^d}\big({\bf I}+a(z,s)\big)\nabla\chi^0(z,s)\,dz,
\end{equation}
where ${\bf I}$ stands for the unit matrix, and $\{\nabla\chi^0(z,s)\}^{ij}=\frac{\partial}{\partial z_i}\chi^{0,j}$.

It is known that the matrix $\aeff$ is positive definite (see, for instance, \cite{KP_1995}). Therefore, problem (\ref{eff_cauch}) is well posed, and function $u^0$ is uniquely defined. Under assumption {\bf a2} the function $u^0$ is smooth and satisfies the estimates
\begin{equation}\label{est_uzero}
\Big|(1+|x|)^ N\frac{\partial^{\bf k}u^0(x,t)}{\partial t^{k_0}\partial x_1^{k_1}\ldots\partial x_d^{k_d}}\Big|\leq C_{N,{\bf k}}
\end{equation}
for all $N>0$ and all multi index ${\bf k}=(k_0,k_1,\ldots, \, k_d)$, $k_i\geq 0$.

\subsection{Main result for smooth coefficients with good mixing properties}
%--------------

Here we assume that condition {\bf (H)} holds. In order to formulate the main results we need a number of auxiliary functions and quantities. For $j=1,2,\ldots, J^0$ with $J^0=\lfloor\frac\alpha{2(2-\alpha)}\rfloor+1$, the higher order correctors are
 introduced as periodic solutions to the equations
 \begin{equation}\label{aux_j}
  \mathrm{div}\big(a(z,s)\nabla \chi^j(z,s)\big)=\partial_s\chi^{j-1}(z,s),
 \end{equation}
 where $\lfloor\cdot\rfloor$ stands for the integer part. Due to \eqref{norm_cond0} for $j=1$ this equation
 is solvable in the space of periodic in $z$ functions. A solution $\chi^1$ is uniquely defined up to an additive constant vector.
 Choosing the constant vector in a proper way yields
 $$
  \int_{\mathbb T^d}\chi^1(z,s)\,dz=0\qquad\hbox{for all } s \ \hbox{and } \omega
 $$
and thus the solvability of the equation for $\chi^2$. Iterating this procedure, we define all $\chi^j$, $j=1,2,\ldots, J^0$.

Next, we introduce the functions  $u^j=u^j(x,t)$, $j=1,\ldots, J^0$. They solve the following problems:
\begin{equation}\label{eff_cauch_j}
\begin{array}{c}
\displaystyle
\frac\partial{\partial t}u^j=\mathrm{div}(\aeff\nabla u^j)+\sum\limits_{k=1}^{j}
\{a^{k,{\rm eff}}\}^{im}\frac{\partial^2}{\partial x_i\partial x_m}u^{j-k}
\\[3mm]
u^j(x,0)=0
\end{array}
\end{equation}
with
\begin{equation}\label{eq:k_effec_matrix}
a^{k,{\rm eff}}={\bf E}\int_{\mathbb T^d}a(z,s)\nabla\chi^k(z,s)\,dz;
\end{equation}
here and later on we assume summation from $1$ to $d$ over repeated indices.

To characterize the diffusive term in the limit equation we introduce the matrix
$$
\Xi(s)=\int_{\mathbb T^d}\Big[\big( a(z,s)+a(z,s)\nabla\chi^0(z,s)\big)-
{\bf E}\big\{ a(z,s)+a(z,s)\nabla\chi^0(z,s)\big\}\Big]dz.
$$
By construction the matrix function $\Xi$ is stationary and its entries satisfy condition {\bf h3} (mixing condition).
Denote
$$
\Lambda=\frac12\int_0^\infty {\bf E}\Big(\Xi(s)\otimes\Xi(0)+\Xi(0)\otimes\Xi(s)\Big)\,ds,\qquad
\Lambda=\{\Lambda^{ijkl}\},
$$
where $(\Xi(s)\otimes\Xi(0))^{ijkl}=\Xi^{ij}(s)\Xi^{kl}(0)$. Under condition {\bf h3} the integral on the right-hand side converges.

The first main result of this paper is
\begin{thm}\label{t_1}
Let Condition {\bf (H)} be fulfilled. Then the functions
$$
U^\eps=\eps^{-\alpha/2}\Big(u^\eps-u^0-\sum\limits_{j=1}^{J_0}\eps^{j(2-\alpha)}u^j\Big)
$$
 converge in law, as $\eps\to0$, in $L^2(\mathbb R^d\times(0,T))$ to the unique solution of
 the following SPDE
 \begin{equation}\label{eff_spde}
\begin{array}{c}
\displaystyle
dv^0=\mathrm{div}(\aeff\nabla v^0)\,dt+(\Lambda^{1/2})^{ijkl}\frac{\partial^2}{\partial x_i\partial x_j}u^0\,dW_t^{kl}
\\[3mm]
v^0(x,0)=0;
\end{array}
\end{equation}
where $W_\cdot=\{W_\cdot^{ij}\}$ is the standard $d^2$-dimensional Brownian motion.
\end{thm}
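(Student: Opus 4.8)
The plan is to combine a two-scale asymptotic expansion with a uniform energy estimate for the remainder and a functional central limit theorem for the oscillating flux. Introduce the fast variables $z=x/\eps$ and $s=t/\eps^\alpha$. Acting on functions of $(x,t,z,s)$, the operator from \eqref{ori_cauch} splits as
$$
L^\eps=\eps^{-2}\mathcal{A}_0+\eps^{-1}\mathcal{A}_1+\mathcal{A}_2+\partial_t+\eps^{-\alpha}\partial_s,
$$
with $\mathcal{A}_0=-\mathrm{div}_z(a\nabla_z\,\cdot\,)$ the cell operator, $\mathcal{A}_1$ the cross term coupling $\nabla_x$ and $\nabla_z$, and $\mathcal{A}_2=-\mathrm{div}_x(a\nabla_x\,\cdot\,)$. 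The decisive structural fact is that, for $0<\alpha<2$, the temporal term $\eps^{-\alpha}\partial_s$ sits strictly between the orders $\eps^{-2}$ and $\eps^0$; balancing it against $\eps^{-2}\mathcal{A}_0$ is precisely what generates the hierarchy of correctors $\chi^j$ and the expansion in powers of $\eps^{2-\alpha}$.

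First I would construct an approximate solution (ansatz)
$$
w^\eps=\sum_{j=0}^{J_0}\eps^{j(2-\alpha)}u^j+\sum_{j\ge 0}\eps^{1+j(2-\alpha)}\chi^{j,k}(z,s)\,\partial_{x_k}\Big(\text{slow fields}\Big)+(\text{second order correctors}),
$$
and plug $w^\eps$ into $L^\eps$, collecting terms by powers of $\eps$. The order $\eps^{-1}$ balance reproduces the classical cell problem \eqref{aux_0} for $\chi^0$. The term $\eps^{-\alpha}\partial_s$ applied to $\eps^{1+j(2-\alpha)}\chi^{j}$ produces a contribution that is exactly cancelled by $\eps^{-2}\mathcal{A}_0$ applied to $\eps^{1+(j+1)(2-\alpha)}\chi^{j+1}$, provided $\chi^{j+1}$ solves \eqref{aux_j}; a short exponent computation shows the two orders coincide, which fixes the power $\eps^{2-\alpha}$. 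The solvability (zero mean over $\mathbb{T}^d$ and over $\omega$) of the order $\eps^{j(2-\alpha)}$ residual yields the effective equations \eqref{eff_cauch_j} for $u^j$, with the higher effective matrices $a^{k,\mathrm{eff}}$ of \eqref{eq:k_effec_matrix} arising from averaging the flux $a\nabla_z\chi^k$ over the cell. The truncation index $J_0=\lfloor\frac{\alpha}{2(2-\alpha)}\rfloor+1$ is chosen so that $J_0(2-\alpha)>\alpha/2$: once all deterministic correctors of size $\gtrsim\eps^{\alpha/2}$ are subtracted, what survives at the scale $\eps^{\alpha/2}$ is purely the random fluctuation of the flux.

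Next I would pass from the formal expansion to a rigorous estimate. Setting $r^\eps=u^\eps-w^\eps$, the function $r^\eps$ solves the original parabolic problem with right-hand side $L^\eps w^\eps$ and a small initial datum, and an energy estimate based on the ellipticity condition {\bf a1} bounds $\|r^\eps\|$ in $L^2(\mathbb{R}^d\times(0,T))$ by a norm of the residual $L^\eps w^\eps$, uniformly in $\omega$. Making this bound smaller than $\eps^{\alpha/2}$ is where the regularity assumption {\bf h2} is used in full: each $\chi^{j+1}$ is obtained from $\chi^j$ by one more $\partial_s$-differentiation, so every step of the hierarchy costs time regularity, and one must track $\mathbf{E}\|\chi^j\|_{C^N}^k$-type bounds along the whole chain up to $j=J_0$. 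The outcome is that $U^\eps$ and the corresponding quantity built from $w^\eps$ have the same limit in law, so it remains to identify the latter.

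Finally I would identify the limit. After subtraction, the leading part of $U^\eps$ solves a linear parabolic equation whose operator homogenizes to $\mathrm{div}(\aeff\nabla\,\cdot\,)$ and whose random forcing is governed by the flux fluctuation $\Xi(t/\eps^\alpha)$ contracted against the second derivatives of $u^0$. The heart of the matter is the weak convergence, jointly with $U^\eps$, of $\eps^{-\alpha/2}\int_0^{\cdot}\Xi(r/\eps^\alpha)\,dr$ to the limiting noise: after the time change $\sigma=r/\eps^\alpha$ this is a normalized additive functional of the stationary mixing process, and the invariance principle under the mixing rate {\bf h3} ($\int_0^\infty\gamma^{1/2}<\infty$) produces a Brownian motion whose covariance is the one encoded by $\Lambda$, white in time because increments over disjoint intervals decorrelate, and deterministic in space because $\Xi$ carries no $x$-dependence. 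Combining tightness of $U^\eps$ in $L^2(\mathbb{R}^d\times(0,T))$ with the homogenization of the oscillating operator and the identification of the forcing as the It\^o integral against $W$ characterizes every limit point as the unique solution of \eqref{eff_spde}. The main obstacle, in my view, is this last identification: one must establish joint convergence in law of the solution and of a stochastic-integral-type forcing, and show the limit is a genuine It\^o integral with the stated covariance, while simultaneously controlling the remainder uniformly through a corrector hierarchy that steadily loses regularity.
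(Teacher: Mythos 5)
Your expansion machinery is the same as the paper's: the corrector hierarchy generated by balancing $\eps^{-\alpha}\partial_s$ against $\eps^{-2}\mathrm{div}_z(a\nabla_z\cdot)$, the exponent computation fixing the step $\eps^{2-\alpha}$, the effective problems \eqref{eff_cauch_j} from the solvability conditions, and the choice $J^0(2-\alpha)>\alpha/2$. Two corrections already at this stage. First, the remainder bounds cannot be ``uniform in $\omega$'': under {\bf h2} the $C^N$-norms of $a$, hence of the $\chi^j$, are random variables with finite moments only, so all estimates are in expectation (as in \eqref{estiV1}, \eqref{esti_V12}). Second, the energy estimate alone does not make the forcing small: the spatially oscillating part $[\widehat a^j(x/\eps,t/\eps^\alpha)-\langle a\rangle^j(t/\eps^\alpha)]\partial^2_{x_ix_m}u^k$ is $O(1)$ in $L^2$ after the $\eps^{-\alpha/2}$ normalization; one needs the mean-zero structure in $z$ to write it as $\eps\,\mathrm{div}\,\Theta^{j,im}+O(\eps)$ with $\Theta$ stationary, gaining a factor $\eps$ in $H^{-1}$ (the paper's $F^\eps$ estimate \eqref{eq:estimF_eps}). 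Only after this reduction does the surviving forcing become the purely temporal fluctuation $\eps^{-\alpha/2}[\langle a\rangle^0(t/\eps^\alpha)-\aeff]^{im}\partial^2_{x_ix_m}u^0$ to which the invariance principle applies; your sketch jumps to this point without the mechanism.

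The genuine gap is the step you yourself call the main obstacle: identifying the limit. Your plan --- tightness of $U^\eps$ in $L^2$, joint convergence with the forcing, and characterization of every limit point as an It\^o integral --- is not carried out, and it is genuinely delicate as stated: at finite $\eps$ the equation for the leading term \eqref{prob_V110} is a PDE with random coefficients, not an SPDE; there is no martingale structure available before the limit, so ``showing the limit is a genuine It\^o integral with the stated covariance'' is precisely what requires an argument. The paper bypasses stochastic identification entirely by an explicit-solution device: because $u^0$ solves the effective equation, the auxiliary problem \eqref{prob_V_auxi} with the \emph{constant-coefficient} operator $\mathrm{div}(\aeff\nabla\cdot)$ and the same forcing is solved in closed form by $V^\eps_{\rm aux}=\eps^{\alpha/2}\zeta^{ij}(t/\eps^\alpha)\,\partial^2_{x_ix_j}u^0$ with $\zeta(s)=\int_0^s[\langle a\rangle^0(r)-\aeff]\,dr$, while the limit SPDE \eqref{eff_spde} is solved by $(\Lambda^{1/2}W_t)^{ij}\partial^2_{x_ix_j}u^0$; hence convergence in law of $V^\eps_{\rm aux}$ is nothing but the invariance principle for $\eps^{\alpha/2}\zeta(\cdot/\eps^\alpha)$ composed with a continuous map. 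What is left --- replacing $\aeff$ by $a(x/\eps,t/\eps^\alpha)$ in the operator --- produces a correction $\mathcal{Z}^\eps$ that is shown to vanish in probability via compactness and the oscillating test functions $\varphi+\eps\chi^0(\tfrac x\eps,\tfrac t{\eps^\alpha})\nabla\varphi$ (Lemma \ref{lmm:weak_conv}). To complete your proof you must either supply this device (or an equivalent one), or actually construct the joint-convergence/martingale argument you allude to; as written, the conclusion does not follow from the steps given.
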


\begin{remark}\label{r_regu_ini}
{\rm The regularity assumption on $\varphi$ given in condition {\bf a2} can be weakened. Namely, the statement of Theorem \ref{t_1}
holds if $\varphi$ is $J^0+1$ times continuously differentiable and the corresponding partial derivatives decay at infinity
sufficiently fast.}
\end{remark}

The scheme of the proof is the following. We write down the following ansatz
$$
V^\eps(x,t)=\eps^{-\alpha/2}\Big\{u^\eps(x,t)- \sum\limits_{k=0}^{J^0}\eps^{k\delta}\Big(u^k(x,t)+\sum\limits_{j=0}^{J^0-k}\eps^{(j\delta+1)}\chi^j\Big(\frac x\eps,\frac t{\eps^\alpha}\Big)\nabla u^k(x,t)\Big)\Big\},
$$
here and in what follows the symbol $\delta$ stands for $2-\alpha$. %In the diffusive case,
%$$\chi^j\Big(\frac x\eps,\frac t{\eps^\alpha}\Big) = \chi^j\Big(\frac x\eps,\xi_{\frac t{\eps^\alpha}}\Big).$$
Then we substitute $V^\eps$ for $u^\eps$ in \eqref{ori_cauch} and we obtain for $V^\eps$ a PDE with random coefficients when {\bf (H)} is in force.
%\begin{itemize}
%\item a PDE with random coefficients when {\bf (H)} is in force;
%\item a SPDE in the diffusive case.
%\end{itemize}
We prove that $V^\eps$ converges in law in the suitable functional space to the solution of (\ref{eff_spde}). %In the case of  smooth coefficients
We combine the definition of correctors,  formula \eqref{eff_cauch_j} and the Cental Limit Theorem for stationary mixing processes. After some manipulations this yields the desired convergence (see Section \ref{s_sec3}). The rest of this section concerns the proof of this result.

%
%\subsection{Scheme of the proofs}
%\label{s_scheme}
%---------------------
%
%In both cases the beginning of the proofs is the same. We write down the following ansatz
%$$
%V^\eps(x,t)=\eps^{-\alpha/2}\Big\{u^\eps(x,t)- \sum\limits_{k=0}^{J^0}\eps^{k\delta}\Big(u^k(x,t)+\sum\limits_{j=0}^{J^0-k}\eps^{(j\delta+1)}\chi^j\Big(\frac x\eps,\frac t{\eps^\alpha}\Big)\nabla u^k(x,t)\Big)\Big\},
%$$
%here and in what follows the symbol $\delta$ stands for $2-\alpha$. In the diffusive case,
%$$\chi^j\Big(\frac x\eps,\frac t{\eps^\alpha}\Big) = \chi^j\Big(\frac x\eps,\xi_{\frac t{\eps^\alpha}}\Big).$$
%Then we substitute $V^\eps$ for $u^\eps$ in \eqref{ori_cauch} and we obtain for $V^\eps$
%\begin{itemize}
%\item a PDE with random coefficients when {\bf (H)} is in force;
%\item a SPDE in the diffusive case.
%\end{itemize}
%We prove that $V^\eps$ converges in law in the suitable functional space to the solution of (\ref{eff_spde}). In the case of  smooth coefficients
% we combine the definition of correctors,  formula \eqref{eff_cauch_j} and the Cental Limit Theorem for stationary mixing processes. After some manipulations this yields the desired convergence (see Section \ref{s_sec3}). \\

\subsection{Proof of Theorem 1}\label{s_sec3}
%-----------------

\subsubsection*{Auxiliary problems.}%\label{s_sec2}

We begin by considering problem (\ref{aux_0}). This equation has a unique up to an additive constant vector periodic solution.
Since $\chi^0(\cdot,s)$ only depends on $a(\cdot,s)$, the solution with zero average is stationary and the strong mixing
coefficient of the pair $(a(\cdot,s),\chi^0(\cdot,s))$
coincides with that for $a(\cdot,s)$. The same statement is valid for any finite collection  $(a(\cdot,s),\chi^0(\cdot,s),,\chi^1(\cdot,s),\ldots )$.
By the classical elliptic estimates, under our standing assumptions
we have
\begin{equation}\label{est_chi0}
\|\chi^0\|_{L^\infty(\mathbb T^d\times[0,T])}\leq C,\qquad
\mathbf{E}\|\chi^0\|^N_{C^k(\mathbb T^d\times[0,T])}\leq C_{k,N}.
\end{equation}
Indeed, multiplying equation \eqref{aux_0}   by  $\chi^0$, using the Schwartz and Poincar\'e inequalities and considering \eqref{norm_cond0},
we conclude that $\|\chi^0(\cdot,s)\|_{H^1(\mathbb T^d)}\leq C$ for all $s\in\mathbb R$.  The first estimate in \eqref{est_chi0} then follows from
 \cite[Theorem 8.4]{GT}. The second estimate follows from the Schauder estimates, see \cite[Chapter 6]{GT}

By the similar arguments, the solutions $\chi^j$ of equations \eqref{aux_j} are stationary, satisfy strong mixing condition with the same
coefficient $\gamma(r)$, and the following estimates hold: for any $N\geq 1$ and $k\geq 0$
\begin{equation}\label{est_chij}
\mathbf{E}\|\chi^j\|^N_{C^k(\mathbb T^d\times[0,T])}\leq C_{k,N}, \quad j=0,\,1,\ldots, J_0.
\end{equation}

The solutions $\chi^j$ defined by \eqref{aux_j_dif} satisfy the same estimate: for any $N>0$ there exists $C_N$ such that
$$
\|\chi^j\|_{C^N(\mathbb T^d\times\mathbb R^n)}\leq C_N.
$$

Solutions $u^0$ and $u^j$ of problems \eqref{eff_cauch}, \eqref{eff_cauch_j} and \eqref{eff_cauch_j_diff} are smooth functions. Moreover, for any ${\bf k}=
(k_0,k_1,\ldots,k_d)$ and $N>0$ there exists a constant $C_{{\bf k},N}$ such that
\begin{equation}\label{est_uuj}
|D^{\bf k}u^j| \leq C_{{\bf k},N}(1+|x|)^{-N},
\end{equation}
where $\displaystyle D^{\bf k} f(x,t)=\frac{\partial^{k_0}}{\partial t^{k_0}}\frac{\partial^{k_1}}{\partial x_1^{k_1}}\ldots\frac{\partial^{k_d}}{\partial x_d^{k_d}}f(x,t)$.

\subsubsection*{The proof of Theorem \ref{t_1}.}%\label{s_sec2}

 For the sake of brevity we use the following notational conventions
\begin{equation}\label{nota_conve}
\begin{array}{c}
\partial_{z_j}=\frac{\partial}{\partial z_j}, \quad \partial_t=\frac{\partial}{\partial t}, \\[3mm]
(\partial_{x_j}f)\big(\frac x\eps\big)=
\partial_{z_j}f(z)\big|_{z=x/\eps},\quad(\partial_tf)\big(\frac t{\eps^\alpha}\big)=
\partial_{s}f(s)\big|_{s=t/\eps^\alpha}.
\end{array}
\end{equation}
Denote
$$
\widehat a^{0,ij}(z,s)=a^{ij}(z,s)+a^{im}(z,s)\partial_{z_m}\chi^{0,j}(z,s)+\partial_{z_m}\big(a^{mi}(z,s)\chi^{0,j}(z,s)\big),
$$
$$
\widehat a^{k,ij}(z,s)=a^{im}(z,s))\partial_{z_m}\chi^{k,j}(z,s)+\partial_{z_m}\big(a^{mi}(z,s)\chi^{k,j}(z,s)\big),\quad k=1,\,2,\ldots,
$$
and from \eqref{eq:k_effec_matrix}
$$a^{k,{\rm eff}}=\mathbf{E}\int\limits_{\mathbb T^d}
\big[\widehat a^k(z,s)\big]dz,\quad k=1,\,2,\ldots
$$
Substituting $V^\eps$ for $u^\eps$ in \eqref{ori_cauch} yields
\begin{equation}\label{prob_V}
\begin{array}{cc}
\partial_t V^\eps-
\mathrm{div}
\big[a\big(\frac x\eps,\frac t{\eps^\alpha}\big)\nabla V^\eps\big]=-\eps^{-\frac\alpha2}\sum\limits_{k=0}^{J^0}\eps^{k\delta}\Big[\partial_tu^k
\\[2.5mm]
+\sum\limits_{j=0}^{J^0-k}\eps^{(j\delta+1-\alpha)}\big(\partial_t\chi^j\big)\big(\frac x\eps,\frac t{\eps^\alpha}\big)\nabla u^k
+\sum\limits_{j=0}^{J^0-k}\eps^{(j\delta+1)}\chi^j\big(\frac x\eps,\frac t{\eps^\alpha}\big)\partial_t\nabla u^k
\Big]\\[3mm]
+\eps^{-\frac\alpha2}\sum\limits_{k=0}^{J^0}\eps^{k\delta-1}\Big[(\mathrm{div} a)\big(\frac x\eps,\frac t{\eps^\alpha}\big)
+\sum\limits_{j=0}^{J^0-k}\eps^{j\delta}\big(\mathrm{div}(a\nabla\chi^j)\big)\big(\frac x\eps,\frac t{\eps^\alpha}\big)\Big]
\nabla u^k\\[3mm]
+\eps^{-\frac\alpha2}\sum\limits_{k=0}^{J^0}\sum\limits_{j=0}^{J^0-k}\eps^{(k+j)\delta}\ \widehat a^{j,im}\big(\frac x\eps,\frac t{\eps^\alpha}\big)
\frac{\partial^2}{\partial x_i\partial x_m}u^k\\[3mm]
+\eps^{-\frac\alpha2}\sum\limits_{k=0}^{J^0}\sum\limits_{j=0}^{J^0-k}\eps^{(k+j)\delta+1}\ (a^{im}\chi^{j,l})\big(\frac x\eps,\frac t{\eps^\alpha}\big)
\frac{\partial^3}{\partial x_i\partial x_m\partial x_l}u^k,\\[4.5mm]
V^\eps(x,0)=\sum\limits_{j=0}^{J^0}\eps^{(j\delta+1)}\chi^j\Big(\frac x\eps,0\Big)\nabla u^0(x,0)
\end{array}
\end{equation}
Due to \eqref{aux_0} and \eqref{aux_j},
\begin{equation*}\label{prob_V_ext100000}
\begin{array}{l}
-\sum\limits_{k=0}^{J^0}\eps^{k\delta}
\sum\limits_{j=0}^{J^0-k}\eps^{(j\delta+1-\alpha)}\big(\partial_t\chi^j\big)\big(\frac x\eps,\frac t{\eps^\alpha}\big)\nabla u^k\\[3mm]
\quad +\sum\limits_{k=0}^{J^0}\eps^{k\delta-1}\Big[(\mathrm{div} a)\big(\frac x\eps,\frac t{\eps^\alpha}\big)
+\sum\limits_{j=0}^{J^0-k}\eps^{j\delta}\big(\mathrm{div}(a\nabla\chi^j)\big)\big(\frac x\eps,\frac t{\eps^\alpha}\big)\Big]
\nabla u^k \\[3mm]
=-\eps^{(J^0+1)\delta-1}\sum\limits_{k=0}^{J^0}\big(\partial_t\chi^{J^0-k}\big)\big(\frac x\eps,\frac t{\eps^\alpha}\big)\nabla u^k.
\end{array}
\end{equation*}
Considering our choice of $J^0$ we have: $(J^0+1)\delta-1 > 1+\al/2$.
Therefore, with the help of  (\ref{eff_cauch}) and  (\ref{eff_cauch_j}) the first relation in \eqref{prob_V}
can be rearranged as follows
 \begin{equation}\label{prob_V_ext1}
\begin{array}{cc}
\partial_t V^\eps-
\mathrm{div}
\big[a\big(\frac x\eps,\frac t{\eps^\alpha}\big)\nabla V^\eps\big]=
\\[2.5mm]-\eps^{-\frac\alpha2}\sum\limits_{k=0}^{J^0}\eps^{k\delta}\partial_tu^k
+\eps^{-\frac\alpha2}\sum\limits_{k=0}^{J^0}\sum\limits_{j=0}^{J^0-k}\eps^{(k+j)\delta}\ \widehat a^{j,im}\big(\frac x\eps,\frac t{\eps^\alpha}\big)
\frac{\partial^2 u^k}{\partial x_i\partial x_m} +\mathcal{R}^\eps(x,t)\\[3mm]
=\eps^{-\frac\alpha2}\sum\limits_{j=0}^{J^0}\sum\limits_{k=0}^{J^0-j}\eps^{(k+j)\delta}\big[\widehat a^j\big(\frac x\eps,\frac t{\eps^\alpha}\big)-a^{j,{\rm eff}}\big]^{im}
\frac{\partial^2 u^k}{\partial x_i\partial x_m}\ +\mathcal{R}^\eps(x,t),
\end{array}
\end{equation}
where we identify $a^{0,{\rm eff}}$ with $\aeff$, and $\mathcal{R}^\eps$ is the sum of all the terms on the right-hand side in \eqref{prob_V}
that are multiplied by a positive power of $\eps$. One can easily check that
\begin{equation}\label{def_cal_R}
\mathcal{R}^\eps(x,t)=\eps^{-\alpha/2} \sum\limits_{j=0}^{J_0}\eps^{1+j\delta}\theta^j\Big(\frac x\eps,\frac t{\eps^\alpha}\Big)\Phi^j(x,t),
\end{equation}
where  $\theta^j(z,s)$ are periodic in $z$, stationary in $s$ and satisfy the estimates
\begin{equation}\label{est_thet}
\mathbf{E}\big(\|\theta^j\|^k_{C(\mathbb T^d\times[0,T])}\big)\leq C_k;
\end{equation}
$\Phi^j$ are smooth functions such that
\begin{equation}\label{schwa_phi}
|D^{\bf k}\Phi^j| \leq C_{{\bf k},N}(1+|x|)^{-N},
\end{equation}
and $N_0$ is a finite number; we do not specify these quantities explicitly because we do not need
this.
We represent $V^\eps$ as the sum $V^\eps= V_1^\eps+V_2^\eps$, where
$ V_1^\eps$ and $V_2^\eps$ solve the following problems:
\begin{equation}\label{prob_V1}
\left\{\!\!\begin{array}{rl}
\displaystyle
\partial_t V_1^\eps\!-&\!\!\!\!\! \displaystyle\mathrm{div}
\Big[a\Big(\frac x\eps,
\frac t{\eps^\alpha}\Big)\nabla V_1^\eps\Big]\!
\\[4mm]
=\!\!& \!\!\displaystyle
\eps^{-\alpha/2}\sum\limits_{k=0}^{J^0}\sum\limits_{j=0}^{J^0-k}\eps^{(k+j)\delta}\Big[\widehat a^j\Big(\frac x\eps,\frac t{\eps^\alpha}\Big)-a^{j,{\rm eff}}\Big]^{im}
\frac{\partial^2}{\partial x_i\partial x_m}u^k,\\[3mm]
V_1^\eps(x,&\!\!\!\!\!0) =0,
\end{array}\right.
\end{equation}
and
\begin{equation}\label{prob_V2}
\left\{\begin{array}{l}
\displaystyle
\partial_t V_2^\eps-\mathrm{div}\displaystyle
\Big[a\Big(\frac x\eps, \displaystyle
\frac t{\eps^\alpha}\Big)\nabla V_2^\eps\Big]
=\mathcal{R}^\eps(x,t),\\[3mm]
V_2^\eps(x,0)=V^\eps(x,0).
\end{array}\right.
\end{equation}
Form (\ref{est_chi0}) and \eqref{est_chij} it follows that the initial condition in the latter problem
satisfies for any $k>0$ the estimate $\mathbf{E}\|V^\eps(\cdot,0)\|^k_{C(\mathbb R^d)}\leq C_k\eps^{k\delta/2}$.
If we multiply equation \eqref{prob_V2} by  $V_2^\eps$ and integrate the resulting relation over $\mathbb R^d\times(0,T)$, then integrating by parts
and  combining estimates \eqref{def_cal_R}, \eqref{est_thet} and the estimates for $\Phi^j$,  we obtain
\begin{equation}\label{estiV1}
\mathbf{E}\|V_2^\eps\|^2_{L^2(\mathbb R^d\times(0,T))}\leq C\eps^\delta.
\end{equation}

Denote
$$
\langle a\rangle^0(s)=\int_{\mathbb T^d}\big(\widehat a^0(z,s)-\aeff\big)dz
$$
$$
\langle a\rangle^k(s)=\int_{\mathbb T^d}\big(\widehat a^k(z,s)-a^{k,{\rm eff}}\big)dz,
\quad k=1,\,2,\ldots
$$
It follows from the definition of $\widehat a^k$ that for any $k\geq0$ and $l>0$ there is a constant $C_{l,k}$ such that
$\mathbf{E}\|(\widehat a^k-\langle a\rangle^k)\|^N_{C^k(\mathbb T^d\times[0,T])}\leq C_{N,k}$. Since for each $s\in\mathbb R$
the mean value of $(\widehat a^k(\cdot,s)-\langle a\rangle^k(s))$ is equal to zero, the problem
$$
\Delta_z\zeta^{k,im}(z,s)=   (\widehat a^k(z,s)-\langle a\rangle^k(s))^{im}
$$
has for each $i$ and $m$ a unique up to an additive constant periodic solution. Letting $\Theta^{k,im}(z,s)=\nabla\zeta^{k,im}(z,s)$, we obtain a stationary in $s$ vector functions $\Theta^{k,im}$ such that
$$
\mathrm{div}\,\Theta^{k,im}(z,s)=(\widehat a^k(z,s)-\langle a\rangle^k(s))^{im}, \qquad {\bf E}\|\Theta^{k,im}\|^N_{C^k(\mathbb T^d\times[0,T])}
\leq C_{N,k}.
$$
It is then  straightforward to check that for the functions
$$
F^{\eps}(x,t)=\eps^{-\alpha/2}\sum\limits_{k=0}^{J^0}\sum\limits_{j=0}^{J^0-k}\eps^{(k+j)\delta}
\Big[\widehat a^j\Big(\frac x\eps,\frac t{\eps^\alpha}\Big)-\langle a\rangle^j\Big(\frac t{\eps^\alpha}\Big)\Big]^{im}
\frac{\partial^2}{\partial x_i\partial x_m}u^k(x,t)
$$
$$
=\eps^{1-\frac\alpha2}\sum\limits_{k=0}^{J^0}\sum\limits_{j=0}^{J^0-k}\eps^{(k+j)\delta}\Big\{
\mathrm{div}\Big[\Theta^{j,im}\Big(\frac x\eps,\frac t{\eps^\alpha}\Big)\frac{\partial^2}{\partial x_i\partial x_m}u^k(x,t)\Big]
$$
$$
\hbox{ }\hskip3.5cm-
\Theta^{j,im}\Big(\frac x\eps,\frac t{\eps^\alpha}\Big)\nabla\Big(\frac{\partial^2}{\partial x_i\partial x_m}u^k(x,t)\Big)\Big\}
$$
the following estimate is fulfilled:
\begin{equation} \label{eq:estimF_eps}
{\bf E}\|F^{\eps}\|^2_{L^2(0,T; H^{-1}(\mathbb R^d))}\leq C\eps^\delta.
\end{equation}
Therefore, a solution to the problem
\begin{equation}\label{prob_V12}
\left\{\begin{array}{l}
\displaystyle
\partial_t V_{1,2}^\eps-\mathrm{div}\displaystyle
\Big[a\Big(\frac x\eps, \displaystyle
\frac t{\eps^\alpha}\Big)\nabla V_{1,2}^\eps\Big]
=F^\eps(x,t),\\[3mm]
V_{1,2}^\eps(x,0)=0.
\end{array}\right.
\end{equation}
admits the estimate
\begin{equation}\label{esti_V12}
\mathbf{E}\|V_{1,2}^\eps\|^2_{L^2(0,T;H^1(\mathbb R^d))}\leq C\eps^\delta.
\end{equation}
Splitting $V^\eps_1=V^\eps_{1,1}+V^\eps_{1,2}$, we conclude that $V^\eps_{1,1}$ solves the following problem
\begin{equation}\label{prob_V11}
\left\{\!\!\begin{array}{rl}
\displaystyle
\partial_t V_{1,1}^\eps\!-&\!\!\!\mathrm{div}\displaystyle
\Big[a\Big(\frac x\eps, \displaystyle
\frac t{\eps^\alpha}\Big)\nabla V_{1,1}^\eps\Big]\! =
\\[4mm]
\!\!\!& \displaystyle
\eps^{-\alpha/2}\sum\limits_{k=0}^{J^0}\sum\limits_{j=0}^{J^0-k}\eps^{(k+j)\delta}\Big[\langle a\rangle^j\Big(\frac t{\eps^\alpha}\Big)-
a^{j,{\rm eff}}\Big]^{im} \frac{\partial^2u^k}{\partial x_i\partial x_m},\\[3mm]
V_{1,1}^\eps(x,&\!\!\!0)=0,
\end{array}\right.
\end{equation}
By construction the strong mixing coefficient of $\widehat a^k$ remains unchanged and is equal to $\gamma(\cdot)$.
Denote by $V_{1,1}^{0,\eps}$ the solution of the following problem
\begin{equation}\label{prob_V110}
\left\{\begin{array}{l}
\displaystyle
\partial_t V_{1,1}^{0,\eps}-\mathrm{div}\displaystyle
\Big[a\Big(\frac x\eps,
\frac t{\eps^\alpha}\Big)\nabla V_{1,1}^{0,\eps}\Big]
=\eps^{-\frac\alpha2}\Big[\langle a\rangle^0\Big(\frac t{\eps^\alpha}\Big)-\aeff\Big]^{im}
\frac{\partial^2u^0}{\partial x_i\partial x_m}\\[4mm]
V_{1,1}^{0,\eps}(x,0)=0,
\end{array}\right.
\end{equation}
\begin{lmm} \label{lmm:weak_conv}
The solution of problem \eqref{prob_V110} converges in law, as $\eps\to0$, in $L^2(\mathbb R^d\times(0,T))$
equipped with strong topology, to the solution of \eqref{eff_spde}.
\end{lmm}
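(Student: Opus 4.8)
The plan is to use the linearity of \eqref{prob_V110} via Duhamel's formula and to treat separately the two limiting mechanisms at play: the homogenization of the random parabolic operator toward $\mathrm{div}(\aeff\nabla\,\cdot\,)$, and the diffusive rescaling of the fast mean-zero forcing. Write $S_\eps(t,s)$ for the (random) propagator of $\partial_t-\mathrm{div}[a(\tfrac x\eps,\tfrac t{\eps^\alpha})\nabla\,\cdot\,]$ and $S_0(\tau)=e^{\tau\,\mathrm{div}(\aeff\nabla)}$ for the homogenized, constant-coefficient semigroup. Recall that the coefficient $\langle a\rangle^0(s)=\Xi(s)$ appearing in \eqref{prob_V110} has zero mean and inherits the mixing coefficient $\gamma$ of Condition {\bf h3}. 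Since $\langle a\rangle^0$ depends only on time, Duhamel's formula reads
\[
V_{1,1}^{0,\eps}(\cdot,t)=\eps^{-\alpha/2}\int_0^t S_\eps(t,s)\big[\langle a\rangle^{0,im}(\tfrac s{\eps^\alpha})\,\partial^2_{x_ix_m}u^0(\cdot,s)\big]\,ds,
\]
whereas the mild form of the target \eqref{eff_spde} is $v^0(\cdot,t)=\int_0^t S_0(t-s)\big[(\Lambda^{1/2})^{imkl}\partial^2_{x_ix_m}u^0(\cdot,s)\big]\,dW^{kl}_s$. Thus it suffices to show that the random signed measure $\eps^{-\alpha/2}\langle a\rangle^{0,im}(s/\eps^\alpha)\,ds$ converges, jointly with the homogenization $S_\eps\to S_0$, to the white noise $(\Lambda^{1/2})^{imkl}\,dW_s^{kl}$.

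The central analytic input is the invariance principle for the stationary mixing field $\langle a\rangle^0=\Xi$: under {\bf h3} the rescaled additive functional $t\mapsto\eps^{\alpha/2}\int_0^{t/\eps^\alpha}\Xi(\sigma)\,d\sigma$ converges in law in $C([0,T])$ to a Brownian motion whose covariance is, by the definition of $\Lambda$, that of $\Lambda^{1/2}W$. To couple this with the operator I would freeze the slow data on a mesoscopic grid $0=t_0<\dots<t_M=T$ with $\eps^\alpha\ll h=t_{\ell+1}-t_\ell\ll 1$, replacing $S_\eps(t,s)\,\partial^2 u^0(\cdot,s)$ by $S_0(t-t_\ell)\,\partial^2 u^0(\cdot,t_\ell)$ on each block, so that
\[
V_{1,1}^{0,\eps}(\cdot,t)\approx\sum_{\ell:\,t_{\ell+1}\le t}S_0(t-t_\ell)\big[\partial^2_{x_ix_m}u^0(\cdot,t_\ell)\big]\;\eps^{-\alpha/2}\!\int_{t_\ell}^{t_{\ell+1}}\!\langle a\rangle^{0,im}(\tfrac s{\eps^\alpha})\,ds.
\]
Because $h\gg\eps^\alpha$, the mixing condition makes the block increments $\eps^{-\alpha/2}\int_{t_\ell}^{t_{\ell+1}}\langle a\rangle^0(s/\eps^\alpha)\,ds$ asymptotically independent and Gaussian, with covariance equal to $h$ times the long-run covariance of $\langle a\rangle^0$; this identifies the right-hand side as a Riemann--It\^o sum converging to $\int_0^t S_0(t-s)(\Lambda^{1/2})^{imkl}\partial^2_{x_ix_m}u^0(\cdot,s)\,dW_s^{kl}=v^0$.

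Three families of errors must be controlled. The freezing of the envelope $\partial^2 u^0(\cdot,s)$ in $s$ is handled by the smoothness and decay \eqref{est_uuj} of $u^0$ together with $h\to0$; the homogenization error $S_\eps(t,s)-S_0(t-s)$ is supplied by the qualitative homogenization of \cite{KP_1995} (with time frozen, which is legitimate since $\alpha<2$ makes the spatial oscillation the dominant scale); and the decoupling of $S_\eps(t,s)$ from the factor $\langle a\rangle^0(s/\eps^\alpha)$ follows because the propagator over $[s,s+O(\eps^\alpha)]$ is negligibly close to the identity and mixing then kills the residual dependence. Finally, since the conclusion is convergence in law for the strong $L^2(\mathbb R^d\times(0,T))$ topology, I would establish tightness: the parabolic smoothing of $S_\eps$ yields spatial $H^1$-regularity, the decay \eqref{est_uuj} of $u^0$ gives tightness at infinity in $x$, and the diffusive scaling gives H\"older-type equicontinuity in $t$ of the stochastic-integral part, which together provide relative compactness of the laws in strong $L^2$.

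The principal obstacle is that the forcing is of size $\eps^{-\alpha/2}$, so the naive energy estimate for \eqref{prob_V110} diverges like $\eps^{-\alpha}$; the boundedness of $V_{1,1}^{0,\eps}$ stems entirely from the temporal cancellation of the mean-zero factor $\langle a\rangle^0$. Consequently one cannot first homogenize the operator with an $L^2$ bound and then apply the central limit theorem: the homogenization and the diffusive passage must be performed simultaneously, keeping the cancellation active throughout. Making rigorous the asymptotic independence of the block increments uniformly with respect to the correlated random propagator $S_\eps$, that is, the genuine decoupling between the medium that transports and the medium that forces, is the delicate heart of the argument.
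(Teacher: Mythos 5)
You have correctly identified both the limit mechanism (the invariance principle for the mean-zero stationary process $\langle a\rangle^0$, giving a Brownian motion with covariance $\Lambda$) and the true difficulty: the interaction of the $\eps^{-\alpha/2}$-sized mean-zero forcing with the random propagator. But your proposal does not resolve that difficulty --- you call it ``the delicate heart of the argument'' and leave it open --- and the two tools you offer for it would not close it. First, the qualitative homogenization of \cite{KP_1995} gives $S_\eps(t,s)\to S_0(t-s)$ with no rate, while against a forcing of size $\eps^{-\alpha/2}$ you would need the propagator error on smooth profiles to be $o(\eps^{\alpha/2})$ uniformly; this is unavailable, since the time oscillation produces \emph{systematic} corrections of order $\eps^{2-\alpha}$ (this is exactly why Theorem \ref{t_1} requires the correctors $u^j$), and $\eps^{2-\alpha}\gg\eps^{\alpha/2}$ as soon as $\alpha>4/3$. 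Any argument must therefore keep the temporal cancellation alive \emph{inside} the propagator-error term, which your blocking scheme does not do. Second, mixing cannot decouple the propagator from the forcing on the diagonal blocks: $S_\eps(t,s)$ depends on the whole field on $[s/\eps^\alpha,t/\eps^\alpha]$, which contains the very window generating the block increment, so there is no time separation for the mixing coefficient $\gamma$ to act on. Your tightness step inherits the same problem: as you note yourself, the plain energy estimate diverges like $\eps^{-\alpha}$, so the bounds you invoke (parabolic smoothing, decay of $u^0$) do not by themselves give tightness either.

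The paper supplies precisely the missing device, and it is purely deterministic, so no decoupling is ever needed. Instead of expanding $S_\eps$ around $S_0$ under the integral, it subtracts the \emph{explicit} solution of the constant-coefficient problem \eqref{prob_V_auxi}, namely $V_{\rm aux}^{\eps}=\eps^{\alpha/2}\zeta\big(\tfrac t{\eps^\alpha}\big)\partial^2 u^0$ with $\zeta(s)=\int_0^s\big[\langle a\rangle^0(r)-\aeff\big]dr$; this absorbs the entire $\eps^{-\alpha/2}$ forcing into an $O(1)$ object to which the invariance principle applies by continuous mapping (this replaces your Riemann--It\^o sums and also settles tightness). The residual $\mathcal{Z}^\eps=V_{1,1}^{0,\eps}-V_{\rm aux}^{\eps}$ then solves \eqref{prob_Zeps}, whose right-hand side $\mathrm{div}\big(\big[a\big(\tfrac x\eps,\tfrac t{\eps^\alpha}\big)-\aeff\big]\nabla V_{\rm aux}^{\eps}\big)$ is $O(1)$ in $L^2(0,T;H^{-1})$ --- the cancellation has already been spent. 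This residual is killed pathwise: for a fixed deterministic $\Xi\in L^2$, the solution of \eqref{prob_Yeps} tends to zero a.s.\ by energy bounds, compactness, and the oscillating test functions $\varphi+\eps\chi^0\big(\tfrac x\eps,\tfrac t{\eps^\alpha}\big)\nabla\varphi$, and the conclusion is transferred to the random right-hand side by density together with the tightness of the laws of $V_{\rm aux}^\eps$. Without some reorganization of this kind --- converting ``large mean-zero forcing times small propagator error'' into ``$O(1)$ oscillating-divergence forcing handled by compensated compactness'' --- your scheme, as written, cannot be completed.
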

\begin{proof}
We consider an auxiliary problem
\begin{equation}\label{prob_V_auxi}
\left\{\begin{array}{l}
\displaystyle
\partial_t V_{\rm aux}^{\eps}-\mathrm{div}\displaystyle \Big[\aeff \nabla V_{\rm aux}^{\eps}\Big]
=\eps^{-\alpha/2}\Big[\langle a\rangle^0\Big(\frac t{\eps^\alpha}\Big)-\aeff \Big]^{ij}
\frac{\partial^2u^0}{\partial x^i\partial x^j}\\[4mm]
V_{\rm aux}^{\eps}(x,0)=0,
\end{array}\right.
\end{equation}
and notice that this problem admits an explicit solution
$$
V_{\rm aux}^{\eps}=\eps^{\alpha/2}\zeta^{ij}\Big(\frac{t}{\eps^\alpha}\Big)\frac{\partial^2u^0}{\partial x^i\partial x^j}
\qquad\hbox{with }\ \zeta(s)=\int_0^s\big[\langle a\rangle^0(r)-\aeff \big]\,dr.
$$
Due to \cite[Lemma VIII.3.102]{JaShi}, \cite[Theorem  VIII.3.97]{JaShi} and Assumption {\bf c5.} the invariance principle holds for the process
$\eps^{\alpha/2}\zeta^{ij}\big(\frac{t}{\eps^\alpha}\big)$, that is $\eps^{\alpha/2}\zeta^{ij}\big(\frac{t}{\eps^\alpha}\big)$,
converges in law, as $\eps\to0$, in $C([0,T])^{d^2}$ to a $d^2$-dimensional Brownian motion with the covariance matrix $\Lambda$.
Since $u^0$ satisfies estimates \eqref{est_uzero}, the last convergence implies that $V_{\rm aux}^{\eps}$   converges in law in
$C((0,T);L^2(\mathbb R^d))$ to the solution of problem \eqref{eff_spde}.

Next, we represent $ V_{1,1}^{0,\eps}$ as $ V_{1,1}^{0,\eps}(x,t)=\mathcal{Z}^{\eps}(x,t)+V_{\rm aux}^{\eps}(x,t)$. Then $\mathcal{Z}^{\eps}$
solves the problem
\begin{equation}\label{prob_Zeps}
\left\{\begin{array}{l}
\displaystyle
\partial_t \mathcal{Z}^{\eps}-\mathrm{div}\displaystyle
\Big[a\Big(\frac x\eps,
\frac t{\eps^\alpha}\Big)\nabla \mathcal{Z}^{\eps}\Big]
=
\mathrm{div}\Big(\Big[a\Big(\frac x\eps,\frac t{\eps^\alpha}\Big)-\aeff \Big]\nabla
V_{\rm aux}^{\eps}(x,t)\Big)\\[4mm]
\mathcal{Z}^{\eps}(x,0)=0,
\end{array}\right.
\end{equation}
and our goal is to show that $\mathcal{Z}^{\eps}$ goes to zero in probability in $L^2((0,T)\times\mathbb R^ d)$,
as $\eps\to0$. To this end we consider one more auxiliary problem that reads
\begin{equation}\label{prob_Yeps}
\left\{\begin{array}{l}
\displaystyle
\partial_t \mathcal{Y}^{\eps}-\mathrm{div}\displaystyle
\Big[a\Big(\frac x\eps,
\frac t{\eps^\alpha}\Big)\nabla \mathcal{Y}^{\eps}\Big]
=
\mathrm{div}\Big(\Big[a\Big(\frac x\eps,\frac t{\eps^\alpha}\Big)-\aeff \Big]
\Xi(x,t)\Big)\\[4mm]
\mathcal{Y}^{\eps}(x,0)=0.
\end{array}\right.
\end{equation}
If the vector function $\Xi\in L^ 2((0,T)\times\mathbb R^d)$, then this problem has a unique solution, and, by the standard energy estimate,
$$
\| \mathcal{Y}^{\eps}\|_{L^2(0,T;H^1(\mathbb R^d))}+\|\partial_t \mathcal{Y}^{\eps}\|_{L^2(0,T;H^{-1}(\mathbb R^d))}\leq C \| \Xi\|_{L^2((0,T)\times\mathbb R^d)}.
$$
According to \cite[Lemma 1.5.2]{lions2002} the family $\{ \mathcal{Y}^{\eps}\}$ is locally compact in $L^2((0,T)\times\mathbb R^d)$.
Combining this with Aronson's estimate (see \cite{aron:68}) we conclude that the family $\{ \mathcal{Y}^{\eps}\}$ is compact in $L^2((0,T)\times\mathbb R^d)$.\\
Assume for a while that $\Xi$ is smooth and satisfies estimates \eqref{est_uzero}.  Multiplying equation \eqref{prob_Yeps} by a test function of the form
$\varphi(x,t)+\eps \chi^0\big(\frac x\eps,\frac t{\eps^\alpha}\big)\nabla\varphi(x,t)$ with $\varphi\in C_0^\infty((0,T)\times\mathbb R^d)$ and integrating
the resulting relation  yields
$$
\begin{array}{ccc}
  -\int_0^T\int_{\mathbb R^d} \mathcal{Y}^{\eps}\Big(\partial_t\varphi+
  \eps^{1-\alpha}(\partial_t \chi^0)\big(\frac x\eps,\frac t{\eps^\alpha}\big)\nabla\varphi +\eps \chi^0\big(\frac x\eps,\frac t{\eps^\alpha}\big)\partial_t \nabla\varphi(x,t) \Big)
dxdt \\[4mm]
+\int\limits_0^T\int\limits_{\mathbb R^d} \partial_{x_m}\mathcal{Y}^{\eps}a^{im}\big(\frac x\eps,\frac t{\eps^\alpha}\big)
\Big[\partial_{x_i}\varphi+
\big(\partial_{x_i}\chi^{0,j}\big)\big(\frac x\eps,\frac t{\eps^\alpha}\big)\partial_{x_j}\varphi +\eps
\chi^{0,j}\big(\frac x\eps,\frac t{\eps^\alpha}\big)\partial_{x_i}\partial_{x_j}\varphi\Big]dxdt
\\[4mm]
=\int\limits_0^T\!\int\limits_{\mathbb R^d}\big[a\big(\frac x\eps,\frac t{\eps^\alpha}\big)-{\rm a}^{\rm eff}\big]^{im}\Xi^m
\big[\partial_{x_i}\varphi+
\big(\partial_{x_i}\chi^{0,j}\big)\big(\frac x\eps,\frac t{\eps^\alpha}\big)\partial_{x_j}\varphi +\eps
\chi^{0,j}\big(\frac x\eps,\frac t{\eps^\alpha}\big)\frac{\partial^2 \varphi}{\partial x_i\partial x_j}\big]dxdt
\end{array}
$$
Since $\int_{\mathbb T^d}\chi^0(z,s)dz=0$, we have $\|(\partial_t\chi^0)(x/\eps,t/\eps^\alpha)\nabla\varphi\|_{L^2(0,T;H^{-1}(\mathbb R^d))}\leq C\eps$.
Therefore, $\int_0^T\int_{\mathbb R^d} \mathcal{Y}^{\eps}
  \eps^{1-\alpha}(\partial_t \chi^0)\big(\frac x\eps,\frac t{\eps^\alpha}\big)\nabla\varphi
dxdt $ tends to zero, as $\eps\to0$. Considering \eqref{aux_0} and \eqref{def_effema} we obtain
$$
\begin{array}{ccc}
\int\limits_0^T\int\limits_{\mathbb R^d} \partial_{x_m}\mathcal{Y}^{\eps}a^{im}\big(\frac x\eps,\frac t{\eps^\alpha}\big)
\Big[\partial_{x_i}\varphi+\big(\partial_{x_i}
\chi^{0,j}\big)\big(\frac x\eps,\frac t{\eps^\alpha}\big)\partial_{x_j}\varphi \Big]dxdt
\\[3mm]
=
-\int\limits_0^T\int\limits_{\mathbb R^d}\mathcal{Y}^{\eps}\big\{a\big(\frac x\eps,\frac t{\eps^\alpha}\big)
\big[{\bf I}+\big(\nabla\chi^0\big)\big(\frac x\eps,\frac t{\eps^\alpha}\big) \big]\big\}^{ij}\frac{\partial^2\varphi}{\partial x_i\partial x_j} dxdt
\end{array}
$$
and
$$
\begin{array}{c}
\int\limits_0^T\!\int\limits_{\mathbb R^d}\big[a\big(\frac x\eps,\frac t{\eps^\alpha}\big)-{\rm a}^{\rm eff}\big]^{im}\Xi^m
\big[\partial_{x_i}\varphi+\big(\partial_{x_i}\chi^{0,j}\big)\big(\frac x\eps,\frac t{\eps^\alpha}\big)\partial_{x_j}\varphi +\eps
\chi^{0,j}\big(\frac x\eps,\frac t{\eps^\alpha}\big)\frac{\partial^2\varphi}{\partial x_i\partial x_j}\big]dxdt\\
=
\int\limits_0^T\!\int\limits_{\mathbb R^d}\big\{a\big(\frac x\eps,\frac t{\eps^\alpha}\big)\big[{\bf I}+
\big(\nabla\chi^0\big)\big(\frac x\eps,\frac t{\eps^\alpha}\big) \big]-{\rm a}^{\rm eff}\big\}^{im}\Xi^m
\partial_{x_i}\varphi dxdt\\
-\int\limits_0^T\!\int\limits_{\mathbb R^d}\{{\rm a}^{\rm eff}\}^{im}\Xi^m\big(\partial_{x_i}\chi^{0,j}\big)\big(\frac x\eps,\frac t{\eps^\alpha}\big)\partial_{x_j}\varphi dxdt\\
+
\int\limits_0^T\!\int\limits_{\mathbb R^d}\big[a\big(\frac x\eps,\frac t{\eps^\alpha}\big)-{\rm a}^{\rm eff}\big]^{im}\Xi^m
\eps \chi^{0,j}\big(\frac x\eps,\frac t{\eps^\alpha}\big)\frac{\partial^2\varphi}{\partial x_i\partial x_j}dxdt
\to 0,
\end{array}
$$
as $\eps\to0$.  Denoting by $\mathcal{Y}^{0}$ the limit of $\mathcal{Y}^{\eps}$ for a subsequence, we conclude that
$$
 \int_0^T\int_{\mathbb R^d} \mathcal{Y}^{0}\Big(-\partial_t\varphi -(\aeff)^{ij}\frac{\partial^ 2\varphi}{\partial x_i\partial x_j} \Big)dxdt=0.
$$
Therefore, $ \mathcal{Y}^{0}=0$, and the whole family $\mathcal{Y}^{\eps}$ a.s. converges to $0$ in $L^2((0,T)\times\mathbb R^d)$.
By the density argument this convergence also holds for any $\Xi\in L^2((0,T)\times\mathbb R^d)$.
Since $V_{\rm aux}^{\eps}$   converges in law in $C((0,T);L^2(\mathbb R^d))$, the solution of problem \eqref{prob_Zeps} converges to
zero in probability in  $L^2((0,T)\times\mathbb R^d)$,  and the statement of the lemma follows.
\end{proof}

From the last lemma it follows that the solution of  problem
\eqref{prob_V11} converges in law, as $\eps\to0$, in $L^2(\mathbb R^d\times(0,T))$
equipped with strong topology, to the solution of (\ref{eff_spde}).
Combining this convergence with \eqref{estiV1} and \eqref{esti_V12}
we conclude that $V^\eps$ converges in law in the same space  to the solution of (\ref{eff_spde}).
 This completes the proof of Theorem \ref{t_1}.

\section{Diffusion case }\label{s_sec4}
%-----------------

In this second setting we assume that the matrix
$a(z,s)$ has the form
\begin{equation} \label{eq:a_diff_case}
a(z,s)={\rm a}(z,\xi_s),
\end{equation}
where  $\xi_s$ is a stationary diffusion process in $\mathbb R^n$ with a generator
$$
\mathcal{L}= \frac{1}{2} \trace [q(y)D^2]+ b(y).\nabla
$$
($\nabla$ stands for the gradient, $D^2$ for the Hessian matrix). In this case even for smooth functions ${\rm a}(z,y)$ the coefficients  of matrix ${\rm a}(z,\xi_s)$ are just H\"older continuous in and not differentiable in time, and the method used in the smooth case fails to work.

\medskip

We still assume that Conditions {\bf a1} and {\bf a2} hold. Moreover we suppose that the matrix-functions ${\rm a}(z,y)$, $q(y)$ and vector-function $b(y)$ possess the following properties:
\medskip\noindent
\begin{itemize}
\item [\bf c1.]  ${\rm a}={\rm a}(z,y)$ is periodic in $z$ and smooth in both variables $z$ and $y$. Moreover, for each $N>0$ there exists $C_N>0$ such that
$$
\|{\rm a}\|_{C^N(\mathbb T^d\times\mathbb R^n)}\leq C_N.
$$
\item [\bf c2.] The matrix $q=q(y)$  satisfies the uniform ellipticity conditions: there exist $\lambda>0$ such that
$$
\lambda^{-1}|\zeta|^2\leq q(y)\zeta\cdot\zeta\leq \lambda|\zeta|^2, \quad y,\,\zeta\in\mathbb R^n.
$$
Moreover there exists a matrix $\sigma=\sigma(y)$ such that $q(y) = \sigma^*(y)\sigma(y)$.
\item [\bf c3.] The matrix function $\sigma$ and vector-function $b$ are smooth, for each $N>0$ there exists $C_N>0$ such that
$$
\|\sigma\|_{C^N(\mathbb R^n)}\leq C_N,\qquad \|b\|_{C^N(\mathbb R^n)}\leq C_N.
$$
\item [\bf c4.] The following inequality holds for some $R>0$ and $C_0>0$ and $p>-1$:
 $$
 b(y)\cdot y\leq -C_0|y|^p \quad \hbox{for all } y\in\{y\in\mathbb R^n\,:\,|y|\geq R\}.
 $$
\end{itemize}
We say that Condition {\bf (C)} holds if {\bf a1}, {\bf a2} and {\bf c1} -- {\bf c4} are satisfied. This case is called the {\it diffusive case}.

\subsection{Existing results}

Again according to \cite{KP_1995}, under {\bf (C)},  the sequence $u^\eps$ converges in probability, as $\eps\to0$, to a solution $u^0$ of problem \eqref{eff_cauch}. Corrector $\chi^0=\chi^0(z,y)$ is a periodic solution of the equation
\begin{equation}\label{aux_0_dif}
\mathrm{div}_z\big({\rm a}(z,y)\nabla_z \chi^0(z,y)\big)=-\mathrm{div}_z {\rm a}(z,y);
\end{equation}
here $y\in\mathbb R^n$ is a parameter. We choose an additive constant in such a way that $\int_{\mathbb T^d}\chi^0(z,y)\,dz=0$.
Let us emphasize that it follows from \eqref{aux_0} and \eqref{aux_0_dif} that the zero order correctors $\chi^0$ coincide in both settings: $\chi^0(z,s)=\chi^0(z,\xi_s)$. The effective matrix is again given by \eqref{def_effema}:
$$
\aeff={\bf E}\int_{\mathbb T^d}\big({\bf I}+{\rm a}(z,\xi_s)\big)\nabla_z\chi^0(z,\xi_s)\,dz.
$$
Let us recall that according to \cite{PP_1} under conditions {\bf c2} and {\bf c4} a diffusion process $\xi_\cdot$ with the generator $\mathcal{L}$ has an invariant measure in $\mathbb R^n$ that has a smooth density $\rho=\rho(y)$. For any $N>0$ it holds
$$(1+|y|)^N \rho(y)\leq C_N$$
with some constant $C_N$. The function $\rho$ is the unique up to a multiplicative constant bounded solution of the equation $\mathcal{L}^*\rho=0$; here $*$ denotes the formally adjoint operator. We assume that the process $\xi_t$ is stationary and distributed with the density $\rho$. The effective matrix can be written here as follows:
$$
\aeff=\int_{\mathbb R^n}\int_{\mathbb T^d} \Big({\rm a}(z,y)+{\rm a}(z,y)\nabla_z \chi^0(z,y)\Big)\rho(y)\,dzdy.
$$

\medskip

In \cite{KPP22}, under the condition that $d=1$, a result similar to Theorem \ref{t_1} is proved. We formulate this result under the assumption that condition {\bf (C)} is fulfilled. As before we introduce several correctors and auxiliary quantities.

Higher order correctors are defined as periodic solutions of the equations
\begin{equation}\label{aux_j_dif}
\mathrm{div}_z\big({\rm a}(z,y)\nabla_z \chi^j(z,y)\big)=-\mathcal{L}_y\chi^{j-1}(z,y), \quad j=1,\,2,\ldots,J^0.
\end{equation}
Notice that $\int_{\mathbb T^d}\chi^{j-1}(z,y)\,dz=0$ for all  $j=1,\,2,\ldots,J^0$, thus the compatibility condition is satisfied and the equations are solvable.

\begin{remark} {\rm We have already mentioned that according to  \eqref{aux_0} and \eqref{aux_0_dif} the zero order correctors coincide in both studied cases. It is interesting to compare the correctors defined in \eqref{aux_j_dif} with the ones given by \eqref{aux_j} and to observe that the higher order correctors need not coincide.  }
\end{remark}

We introduce the matrices
$$
a^{k,{\rm eff}}=\int_{\mathbb R^n}\int_{\mathbb T^d}
\big[{\rm a}(z,y)\nabla_z\chi^k(z,y)+\nabla_z\big({\rm a}(z,y)\chi^k(z,y)\big)\big]\rho(y)\,dzdy,\quad k=1,\,2,\ldots,
$$
and matrix valued functions
\begin{equation}\label{eq:hat_a_0}
\widehat{\rm a}^0(z,y)={\rm a}(z,y)+{\rm a}(z,y)\nabla_z\chi^0(z,y)+\nabla_z\big({\rm a}(z,y)\chi^0(z,y)\big),
\end{equation}
$$
\widehat{\rm a}^k(z,y)={\rm a}(z,y)\nabla_z\chi^k(z,y)+\nabla_z\big({\rm a}(z,y)\chi^k(z,y)\big),\quad k=1,\,2,\ldots,
$$
\begin{equation}\label{eq:angle_a_0}
\langle {\rm a}\rangle^0(y)=\int_{\mathbb T^d}\big(\widehat {\rm a}^0(z,y)-\aeff\big)dz,
\end{equation}
$$
\langle {\rm a}\rangle^k(y)=\int_{\mathbb T^d}\big(\widehat {\rm a}^k(z,y)-a^{k,{\rm eff}}\big)dz,
\quad k=1,\,2,\ldots
$$
The functions $u^j=u^j(x,t)$ are defined as solutions of problems
\begin{equation}\label{eff_cauch_j_diff}
\begin{array}{c}
\displaystyle
\frac\partial{\partial t}u^j=\mathrm{div}(\aeff\nabla u^j)+\sum\limits_{k=1}^{j}
\{a^{k,{\rm eff}}\}^{im} \frac{\partial^2}{\partial x_i \partial x_m}u^{j-k}
\\[3mm]
u^j(x,0)=0
\end{array}
\end{equation}
Since for each $j=1,2,\ldots$ problem (\ref{eff_cauch_j_diff}) has a unique solution, the functions $u^j$ are uniquely defined. Finally, we consider the equation
\begin{equation}\label{eqdefQ0}
\mathcal{L}Q^0(y)=\langle {\rm a}\rangle^0(y).
\end{equation}
According to \cite[Theorems 1 and 2]{pard:vere:01}, this equation has a unique up to an additive constant solution of at most polynomial growth.
Denote
\begin{equation}\label{defLamdif}
\Lambda=\{\Lambda^{ijml}\}=\int_{\mathbb R^n}\Big[\frac{\partial}{\partial y_{r_1}}(Q^{0})^{ij}(y)\Big]q^{r_1r_2}(y)
\Big[\frac{\partial}{\partial y_{r_2}}(Q^{0})^{ml}(y)\Big] \rho(y)\,dy.
\end{equation}
The matrix $\Lambda$ is non-negative. Consequently its square root $\Lambda^{1/2}$ is well defined.

In the diffusive case  the following result holds:
\begin{thm}\label{t_2diff}
Under Condition {\bf (C)}, if $d=1$ or if $\alpha \leq 1$, the normalized functions
$$
U^\eps=\eps^{-\alpha/2}\Big(u^\eps-u^0-\sum\limits_{j=1}^{J_0}\eps^{j(2-\alpha)}u^j\Big)
$$
 converge in law, as $\eps\to0$, in $L^2(\mathbb R^d\times(0,T))$ to the unique solution of \eqref{eff_spde}
with the standard $d^2$-dimensional Brownian motion $W$ and $\Lambda$ defined in \eqref{defLamdif}.
\end{thm}
Note that Remark \ref{r_regu_ini} on $\varphi$ still applies in this case. Let us again emphasize that the case $d=1$ is proved in \cite{KPP22}. Hence only the case $\alpha \leq 1$ is addressed here.

\subsection{Proof of Theorem \ref{t_2diff} for $\alpha \leq 1$}

The beginning is the same as in Section \ref{s_sec3} and is developed in \cite[Section 3.1]{KPP22}. We consider the following expression:
$$
V^\eps(x,t)=\eps^{-\alpha/2}\Big\{u^\eps(x,t)- \sum\limits_{k=0}^{J^0}\eps^{k\delta}\Big(u^k(x,t)+\sum\limits_{j=0}^{J^0-k}\eps^{(j\delta+1)}\chi^j\Big(\frac x\eps,\xi_\frac t{\eps^\alpha}\Big)\nabla u^k(x,t)\Big)\Big\},
$$
where $\chi^j(z,y)$ and $u^k(x,t)$ are defined in \eqref{aux_j_dif} and \eqref{eff_cauch_j_diff}, respectively. We substitute $V^\eps$ for $u^\eps$ in \eqref{ori_cauch} using It\^o's formula:
\begin{align*}
& d V^\eps-
\mathrm{div} \big[a\big(\frac x\eps,\xi_{\frac t{\eps^\alpha}}\big)\nabla V^\eps\big]dt\\
& =-\eps^{-\frac\alpha2}\sum\limits_{k=0}^{J^0}\eps^{k\delta}\Big[\partial_tu^k +\sum\limits_{j=0}^{J^0-k}\eps^{(j\delta+1-\alpha)}\big(\cL_y \chi^j\big)\big(\frac x\eps,\xi_{\frac t{\eps^\alpha}}\big)\nabla u^k \\
&\qquad \qquad  +\sum\limits_{j=0}^{J^0-k}\eps^{(j\delta+1)}\chi^j\big(\frac x\eps,\xi_{\frac t{\eps^\alpha}}\big)\partial_t\nabla u^k
\Big] dt \\
&+
\sum\limits_{k=0}^{J^0}\sum\limits_{j=0}^{J^0-k}\eps^{(1-\alpha+(k+j)\delta)}\sigma(\xi_\frac t{\eps^\alpha})\nabla_y\chi^j\Big(\frac x\eps,\xi_\frac t{\eps^\alpha}\Big)\nabla u^k \,dB_t
\end{align*}
\begin{align*}
&+\eps^{-\frac\alpha2}\sum\limits_{k=0}^{J^0}\eps^{k\delta-1}\Big[(\mathrm{div} a)\big(\frac x\eps,\xi_{\frac t{\eps^\alpha}}\big)
+\sum\limits_{j=0}^{J^0-k}\eps^{j\delta}\big(\mathrm{div}(a\nabla\chi^j)\big)\big(\frac x\eps,\xi_{\frac t{\eps^\alpha}}\big)\Big]
\nabla u^k dt\\
&+\eps^{-\frac\alpha2}\sum\limits_{k=0}^{J^0}\sum\limits_{j=0}^{J^0-k}\eps^{(k+j)\delta}\ \widehat a^{j,im}\big(\frac x\eps,\xi_{\frac t{\eps^\alpha}}\big)
\frac{\partial^2}{\partial x_i\partial x_m}u^k dt\\
&+\eps^{-\frac\alpha2}\sum\limits_{k=0}^{J^0}\sum\limits_{j=0}^{J^0-k}\eps^{(k+j)\delta+1}\ (a^{im}\chi^{j,l})\big(\frac x\eps,\xi_{\frac t{\eps^\alpha}}\big)
\frac{\partial^3}{\partial x_i\partial x_m\partial x_l}u^k dt.
\end{align*}
Here the $n\times n$ matrix $\sigma(y)$ is such that $\sigma(y)\sigma^*(y)=2q(y)$, $B_.$ is a standard $n$-dimensional Brownian motion. Due to \eqref{aux_0_dif} and \eqref{aux_j_dif}
\begin{equation*}\label{prob_V_ext100000_dif}
\begin{array}{l}
-\sum\limits_{k=0}^{J^0}\eps^{k\delta}
\sum\limits_{j=0}^{J^0-k}\eps^{(j\delta+1-\alpha)}\big(\cL_y\chi^j\big)\big(\frac x\eps,\xi_{\frac t{\eps^\alpha}}\big)\nabla u^k\\[3mm]
\quad +\sum\limits_{k=0}^{J^0}\eps^{k\delta-1}\Big[(\mathrm{div} a)\big(\frac x\eps,\xi_{\frac t{\eps^\alpha}}\big)
+\sum\limits_{j=0}^{J^0-k}\eps^{j\delta}\big(\mathrm{div}(a\nabla\chi^j)\big)\big(\frac x\eps,\xi_{\frac t{\eps^\alpha}}\big)\Big]
\nabla u^k \\[3mm]
=-\eps^{(J^0+1)\delta-1}\sum\limits_{k=0}^{J^0}\big(\cL_y\chi^{J^0-k}\big)\big(\frac x\eps,\xi_{\frac t{\eps^\alpha}}\big)\nabla u^k.
\end{array}
\end{equation*}
Considering equations \eqref{eff_cauch_j_diff} and the definitions of $a^{k,{\rm eff}}$ and $\widehat{\rm a}^k(z,y)$,  we obtain an expression similar to that in \eqref{prob_V_ext1}
\begin{align}\label{prob_V_dif_2}
& d V^\eps \displaystyle(x,t)-\mathrm{div}
\Big[{\rm a}\Big(\frac x\eps,\xi_\frac t{\eps^\alpha}\Big)\nabla V^\eps\Big]\,dt\\ \nonumber
& =
\Big(\eps^{-\alpha/2}\sum\limits_{j=0}^{J^0}\sum\limits_{k=0}^{J^0-j}\eps^{(k+j)\delta}\Big[\widehat {\rm a}^k\Big(\frac x\eps,
\xi_\frac t{\eps^\alpha}\Big)-a^{k,{\rm eff}}\Big]^{im} \frac{\partial^2 u^j}{\partial x_i\partial x_m}\Big)\,dt\\ \nonumber
& \quad +
\sum\limits_{k=0}^{J^0}\sum\limits_{j=0}^{J^0-k}\eps^{(1-\alpha+(k+j)\delta)}\sigma(\xi_\frac t{\eps^\alpha})\nabla_y\chi^j\Big(\frac x\eps,\xi_\frac t{\eps^\alpha}\Big)\nabla u^k(x,t)\,dB_t\\ \nonumber
&\quad  +\mathcal{R}^\eps(x,t)\,dt,
 \end{align}
with $a^{0,{\rm eff}} = \aeff$ and the initial condition
$$V^\eps(x,0)=\eps^{1-\alpha/2} \sum\limits_{k=0}^{J^0}\sum\limits_{j=0}^{J^0-k}\eps^{j\delta}\chi^j\Big(\frac x\eps,\xi_0\Big)\nabla u^k(x,0)$$
and
\begin{equation}\label{def_cal_R_dif}
\mathcal{R}^\eps(x,t)=\eps^{-\alpha/2} \sum\limits_{j=0}^{J^0}\eps^{1+j\delta}\vartheta^j\Big(\frac x\eps,\xi_\frac t{\eps^\alpha}\Big)\Phi^j(x,t)
\end{equation}
with periodic in $z$ smooth functions $\vartheta^j=\vartheta^j(z,y)$ of at most polynomial growth in $y$, and $\Phi^j$ satisfying
\eqref{schwa_phi}.

We represent $V^\eps$ as the sum $V^\eps=V_1^\eps+V_2^\eps+V_3^\eps$ where $V^\eps_1$ and $V^\eps_2$ solve problems equivalent to \eqref{prob_V1} and \eqref{prob_V2}:
\begin{equation}\label{prob_V1_dif}
\left\{\!\!\begin{array}{rl}
\displaystyle
\partial_t V_1^\eps\!-&\!\!\!\!\! \displaystyle\mathrm{div}
\Big[{\rm a}\Big(\frac x\eps,
\xi_{\frac t{\eps^\alpha}}\Big)\nabla V_1^\eps\Big]\!
\\[4mm]
=\!\!& \!\!\displaystyle
\eps^{-\alpha/2}\sum\limits_{k=0}^{J^0}\sum\limits_{j=0}^{J^0-k}\eps^{(k+j)\delta}\Big[\widehat {\rm a}^j\Big(\frac x\eps,\frac t{\eps^\alpha}\Big)-a^{j,{\rm eff}}\Big]^{im}
\frac{\partial^2 u^k}{\partial x_i\partial x_m},\\[3mm]
V_1^\eps(x,&\!\!\!\!\!0) =0,
\end{array}\right.
\end{equation}
and
\begin{equation}\label{prob_V2_diff}
\left\{\begin{array}{l}
\displaystyle
\partial_t V_2^\eps-\mathrm{div}\displaystyle
\Big[{\rm a}\Big(\frac x\eps, \displaystyle
\xi_{\frac t{\eps^\alpha}}\Big)\nabla V_2^\eps\Big]
=\mathcal{R}^\eps(x,t),\\[3mm]
V_2^\eps(x,0)=V^\eps(x,0).
\end{array}\right.
\end{equation}
We have
\begin{eqnarray*}
{\bf E}\|\mathcal{R}^\eps\|^2_{L^2(\mathbb R^d\times(0,T))} & \leq & C\eps^{1-\alpha/2}\int_0^T\int_{\mathbb R^d}\int_{\mathbb R^n}
(1+|y|)^{N_1}(1+|x|)^{-2n}\rho(y)\,dydxdt \\
& \leq & C \eps^{1-\alpha/2}.
\end{eqnarray*}
Similarly, ${\bf E}\|V_2^\eps(\cdot,0)\|^2_{L^2(\mathbb R^d)}\leq C\eps^{1-\alpha/2}$. Therefore, $V^\eps_2$ still satisfies \eqref{estiV1} and thus does not contribute in the limit.

The last term $V_3^\eps$ satisfies the SPDE:
\begin{equation}\label{prob_VF_dif}
\begin{array}{l}
\displaystyle
d V_3^\eps\displaystyle(x,t)-\mathrm{div}
\Big[{\rm a}\Big(\frac x\eps,\xi_\frac t{\eps^\alpha}\Big)\nabla V_3^\eps\Big]\,dt\\[4mm]
\hskip 1cm = \displaystyle \eps^{1-\alpha}
\sum\limits_{k=0}^{J^0}\sum\limits_{j=0}^{J^0-k}\eps^{(k+j)\delta}\sigma(\xi_\frac t{\eps^\alpha})\nabla_y\chi^j\Big(\frac x\eps,\xi_\frac t{\eps^\alpha}\Big)\nabla u^k(x,t)\,dB_t
\end{array}
\end{equation}
with initial condition $V_3^\eps(x,0)=0$. Let us again emphasize that the diffusive case cannot be deduced from our first case because of the presence of $V_3^\eps$.

\medskip

We turn to  $V^\eps_1$.  The statement similar to that  of Lemma \ref{lmm:weak_conv} still holds. Indeed the equivalent of $F^\eps$
$$
H^\eps(x,t)=\eps^{-\alpha/2}\sum\limits_{j=0}^{J^0}\sum\limits_{k=0}^{J^0-j}\eps^{(k+j)\delta}\Big[\widehat {\rm a}^k\Big(\frac x\eps,
\xi_\frac t{\eps^\alpha}\Big)-\langle{\rm a}\rangle^k\big(\xi_\frac t{\eps^\alpha}\big)\Big]^{im}
\frac{\partial^2 u^j}{\partial x_i \partial x_m}
$$
admits the estimate \eqref{eq:estimF_eps}:
\begin{equation}\label{est_HHH}
{\bf E}\|H^\eps\|\big.^2_{L^2(0,T;H^{-1}(\mathbb R^d))}\leq C\eps^{2-\alpha}.
\end{equation}
We split $V^\eps_1=V^\eps_{1,1}+V^\eps_{1,2}$, where
\begin{itemize}
\item $V_{1,2}^\eps$ solves \eqref{prob_V12} with $H^\eps$ on the right-hand side instead of $F^\eps$, it admits estimate \eqref{esti_V12};
\item $V^\eps_{1,1}$ solves \eqref{prob_V11}.
\end{itemize}
According to \cite[Theorem 3]{pard:vere:01} the processes
$$
A^k(t)=\int_0^t (\langle a\rangle^k(\xi_s)- a^{k,{\rm eff}} )ds
$$
satisfy the functional central limit theorem (invariance principle), that is the process
$$
A^{\eps,k}(t)=\eps^{\frac\alpha2}\int_0^{\eps^{-\alpha}t} (\langle a\rangle^k(\xi_s)- a^{k,{\rm eff}} )ds
$$
converges in law in $C([0,T]; \R^{d^2})$ to a $d^2$-dimensional Brownian motion with covariance matrix
$$
(\Lambda_k)=\{(\Lambda_k)^{ijml}\}=\int_{\mathbb R^n}\Big[\frac{\partial}{\partial y_{r_1}}(Q^{k})^{ij}(y)\Big]q^{r_1r_2}(y)
\Big[\frac{\partial}{\partial y_{r_2}}(Q^{k})^{ml}(y)\Big] \rho(y)\,dy.
$$
with matrix-function $Q^0$ defined in \eqref{eqdefQ0} and $Q^k$ given by
\begin{equation}\label{eqdefQj}
\mathcal{L}Q^k(y)=\langle {\rm a}\rangle^k(y), \qquad k=1,\ldots.
\end{equation}
By the same arguments as those in the proof of Theorem \ref{t_1} (see also \cite[Lemma 5.1]{KPP_2015}), we obtain the  same conclusions as in  Lemma \ref{lmm:weak_conv}.

\medskip

To finish the proof of Theorem \ref{t_2diff}, we need to control $V^\eps_3$, solution of problem \eqref{prob_VF_dif}. Here we distinguish two cases: $\alpha < 1$ and $\alpha =1$.
As remarked in \cite[Section 4.3]{KPP22}, if $\alpha < 1$, ${\bf E}\| \sup_{0\leq t \leq T} V_3^\eps(\cdot,t)\|^2_{L^2(\mathbb R^d)}\leq C\eps^{1-\alpha}$ and thus this term also does not contribute in the limit equation.  Nonetheless for $\alpha = 1$, the leading term in $V^\eps_3$ solves the SPDE
\begin{equation}
d r^\eps\displaystyle(x,t)-\mathrm{div}
\Big[{\rm a}\Big(\frac x\eps,\xi_\frac t{\eps}\Big)\nabla r^\eps\Big]\,dt  = \displaystyle  \nabla_y\chi^0 \Big(\frac x\eps,\xi_\frac t{\eps}\Big)\nabla u^0(x,t) \sigma(\xi_\frac t{\eps}) \,dB_t.
\end{equation}
\begin{lmm} \label{lmm:alpha=1}
$r^\eps$ converges to zero in probability in
$L^2(0,T;L^2(\R^d)).$
\end{lmm}
Assume for a while that this claim holds. Then due to positive powers of $\eps$ in the other terms of \eqref{prob_VF_dif}, we deduce that $V^\eps_3$ also tends to zero in the same space and the conclusion of Theorem \ref{t_2diff} follows.

\subsection{Proof of Lemma \ref{lmm:alpha=1}}
%-------------------

Let us define
$$v^\eps_t = \int_{\R^d} r^\eps(x,t)^2 dx = \|r^\eps(\cdot,t)\|^2_{L^2(\R^d)}$$
and
$$\Theta^\eps\Big(\frac x\eps,\xi_\frac t{\eps} ,x, t \Big) =  \nabla_y\chi^0 \Big(\frac x\eps,\xi_\frac t{\eps}\Big)\nabla u^0(x,t) \sigma(\xi_\frac t{\eps}).$$
Note that $v^\eps_0 = 0$. It\^o's formula and an integration by part lead to: for any $0 \leq t \leq T$
%\begin{eqnarray*}
%v^\eps_t & = &2 \int_0^t \int_{\R^d} r^\eps(x,s) \mathrm{div} \Big[{\rm a}\Big(\frac x\eps,\xi_\frac s{\eps}\Big)\nabla r^\eps(x,s)\Big] dx ds \\
%& +&2   \int_0^t \int_{\R^d} r^\eps(x,s) \Theta^\eps\Big(\frac x\eps,\xi_\frac s{\eps} ,x, s \Big)dx \,dB_s \\
%& + & \int_0^t \int_{\R^d} \left\| \Theta^\eps\Big(\frac x\eps,\xi_\frac s{\eps} ,x, s \Big)\right\|^2 dx \,ds.
%\end{eqnarray*}
%An integration by part shows that
\begin{eqnarray*}
&& v^\eps_t + 2\int_0^t \int_{\R^d} \nabla r^\eps(x,s)  \Big[{\rm a}\Big(\frac x\eps,\xi_\frac s{\eps}\Big)\nabla r^\eps(x,s)\Big] dx ds \\
&&\quad  =  2 \int_0^t \int_{\R^d} r^\eps(x,s) \Theta^\eps\Big(\frac x\eps,\xi_\frac s{\eps} ,x, s \Big)dx \,dB_s \\
&& \quad +  \int_0^t \int_{\R^d} \left\| \Theta^\eps\Big(\frac x\eps,\xi_\frac s{\eps} ,x, s \Big)\right\|^2 dx \,ds.
\end{eqnarray*}
From Condition {\bf a1}, taking $t=T$ and the expectation, there exists a constant $C$ independent of $\eps$ such that
\begin{equation} \label{eq:H1_boundedness_r_eps}
\mE \int_0^T \left\| \nabla r^\eps(\cdot,s) \right\|^2_{L^2(\R^d)} ds \leq C.
\end{equation}
Moreover by Burkholder-Davis-Gundy, Cauchy-Schwarz and Young inequalities we have
\begin{equation}\label{eq:bound_sup}
\mE \left[ \sup_{t\in[0,T]}  v^\eps_t \right]  = \mE \left[ \sup_{t\in[0,T]} \|r^\eps(\cdot,t)\|^2_{L^2(\R^d)} \right]\leq C.
\end{equation}
Indeed
\begin{align*}
\mE \left[ \sup_{t\in[0,T]}  v^\eps_t \right] & = \mE \left[ \sup_{t\in[0,T]} \|r^\eps(\cdot,t)\|^2_{L^2(\R^d)} \right] \\
& \leq  C \mE \left[   \left(   \int_0^T \left|  \int_{\R^d} r^\eps(x,s) \Theta^\eps\Big(\frac x\eps,\xi_\frac s{\eps} ,x, s \Big)dx \right|^2 \,ds \right)^{1/2} \right]   \\
& + \mE \left[ \int_0^T \int_{\R^d} \left\| \Theta^\eps\Big(\frac x\eps,\xi_\frac s{\eps} ,x, s \Big)\right\|^2 dx \,ds \right]  \\
%&  \leq  C \mE \left[   \left(   \int_0^T v^\eps_s \left(  \int_{\R^d} \left\| \Theta^\eps\Big(\frac x\eps,\xi_\frac s{\eps} ,x, s \Big)\right\|^2 dx \right)  \,ds \right)^{1/2} \right]   \\
%& + \mE \left[ \int_0^T \int_{\R^d} \left\| \Theta^\eps\Big(\frac x\eps,\xi_\frac s{\eps} ,x, s \Big)\right\|^2 dx \,ds \right] \\
&  \leq  \dfrac{1}{2} \mE \left[  \sup_{t\in[0,T]}  v^\eps_t \right] + \dfrac{C}{2} \mE \left[    \int_0^T  \int_{\R^d} \left\| \Theta^\eps\Big(\frac x\eps,\xi_\frac s{\eps} ,x, s \Big)\right\|^2 dx  \,ds \right]   \\
& + \mE \left[ \int_0^T \int_{\R^d} \left\| \Theta^\eps\Big(\frac x\eps,\xi_\frac s{\eps} ,x, s \Big)\right\|^2 dx \,ds \right].
\end{align*}

Now we prove that the sequence $r^\eps$ is tight in
$$V_T= L^2_w(0,T; H^1(\R^d))\cap C(0,T;L^2_w(\R^d)).$$
Remenber that the index $w$ means that the corresponding space is equipped with the weak topology.
We turn to estimating the modulus of continuity of the inner product of $r^\eps$ with a test function $\phi$.

For any function $\phi \in C^\infty_0(\R^d)$ we define
\begin{align*}
\widehat v^\eps_t & = \int_{\R^d} r^\eps(x,t) \left( \phi(x) + \eps \nabla \phi(x) \chi^0 \Big(\frac x\eps,\xi_\frac t{\eps}\Big)  \right) dx  = \langle r^\eps(\cdot,t) ,\phi^\eps(\cdot,t) \rangle_{L^2(\R^d)}.
\end{align*}
Again by It\^o's formula for any $0 \leq t \leq T$
\begin{align*}
\widehat v^\eps_t & =  \int_0^t \int_{\R^d} \phi^\eps(x) \mathrm{div} \Big[{\rm a}\Big(\frac x\eps,\xi_\frac s{\eps}\Big)\nabla r^\eps(x,s)\Big] dx ds \\
%& +  \eps  \int_0^t \int_{\R^d} \nabla \phi(x) \chi^0 \Big(\frac x\eps,\xi_\frac s{\eps}\Big) \mathrm{div} \Big[{\rm a}\Big(\frac x\eps,\xi_\frac s{\eps}\Big)\nabla r^\eps(x,s)\Big] dx ds\\
& +  \int_0^t \int_{\R^d}  r^\eps(x,s)  \nabla \phi(x) \mathcal L_y \chi^0 \Big(\frac x\eps,\xi_\frac s{\eps}\Big)dx ds \\
& + \int_0^t \int_{\R^d} \phi^\eps(x,s) \Theta^\eps\Big(\frac x\eps,\xi_\frac s{\eps} ,x, s \Big)dx \,dB_{s} \\
& +  \eps^{1/2} \int_0^t \int_{\R^d}  r^\eps(x,s)  \nabla \phi(x) \nabla \chi^0 \Big(\frac x\eps,\xi_\frac s{\eps}\Big)dx dB_{s}\\
& +   \eps^{1/2} \int_0^t \int_{\R^d}   \Theta^\eps\Big(\frac x\eps,\xi_\frac s{\eps} ,x, s \Big) \nabla \phi(x) \nabla \chi^0 \Big(\frac x\eps,\xi_\frac s{\eps}\Big)dx ds.
\end{align*}
Evoke that $\widehat{\rm a}^0$ is defined by \eqref{eq:hat_a_0}. With an integration by parts we obtain
%\begin{eqnarray*}
%\widehat v^\eps_t & = &\eps^{-1} \int_0^t \int_{\R^d} r^\eps(x,s) (\mathrm{div}_z {\rm a}) \Big(\frac x\eps,\xi_\frac s{\eps}\Big) \nabla \phi(x) dx ds \\
%& +&  \int_0^t \int_{\R^d} r^\eps(x,s) \mbox{Trace} \Big[ D^2 \phi(x) {\rm a} \Big(\frac x\eps,\xi_\frac s{\eps}\Big)\Big]  dx ds \\
%& +& \int_0^t \int_{\R^d} \phi^\eps(x,s) \Theta^\eps\Big(\frac x\eps,\xi_\frac s{\eps} ,x, s \Big)dx \,dB_{s} \\
%& + & \int_0^t \int_{\R^d}  r^\eps(x,s)  \nabla \phi(x) \mathcal L \chi \Big(\frac x\eps,\xi_\frac s{\eps}\Big)dx ds \\
%& + & \eps^{1/2} \int_0^t \int_{\R^d}  r^\eps(x,s)  \nabla \phi(x) \nabla \chi \Big(\frac x\eps,\xi_\frac s{\eps}\Big)dx dB_{s}\\
%& + &  \eps^{1/2} \int_0^t \int_{\R^d}   \Theta^\eps\Big(\frac x\eps,\xi_\frac s{\eps} ,x, s \Big) \nabla \phi(x) \nabla \chi \Big(\frac x\eps,\xi_\frac s{\eps}\Big)dx ds\\
%& - & \eps  \int_0^t \int_{\R^d} \chi \Big(\frac x\eps,\xi_\frac t{\eps}\Big)\nabla^2 \phi(x) {\rm a}\Big(\frac x\eps,\xi_\frac s{\eps}\Big)\nabla r^\eps(x,s) dx ds \\
%& - &   \int_0^t \int_{\R^d} \nabla \phi(x) \nabla_z \chi \Big(\frac x\eps,\xi_\frac s{\eps}\Big) {\rm a}\Big(\frac x\eps,\xi_\frac s{\eps}\Big)\nabla r^\eps(x,s) dx ds
%\end{eqnarray*}
%
%\langle {\rm a}\rangle^0(y)=\int_{\mathbb T^d}\big(\widehat {\rm a}^0(z,y)-\aeff\big)dz,

\begin{align} \label{eq:dyn_hat_v_eps}
\widehat v^\eps_t & = \int_0^t \int_{\R^d} r^\eps(x,s)  \widehat{\rm a}^0 \Big(\frac x\eps,\xi_\frac s{\eps}\Big) \nabla^2 \phi(x)   dx ds \\ \nonumber
%& + &  \int_0^t \int_{\R^d} r^\eps(x,s) \mathrm{div}_z\Big[ \chi \Big(\frac x\eps,\xi_\frac s{\eps}\Big) {\rm a}\Big(\frac x\eps,\xi_\frac s{\eps}\Big) \Big]\nabla^2 \phi(x) dx ds \\
& +  \int_0^t \int_{\R^d}  r^\eps(x,s)  \nabla \phi(x) \mathcal L_y \chi^0 \Big(\frac x\eps,\xi_\frac s{\eps}\Big)dx ds \\ \nonumber
& + \int_0^t \int_{\R^d} \phi(x,s) \Theta^\eps\Big(\frac x\eps,\xi_\frac s{\eps} ,x, s \Big)dx \,dB_{s} \\ \nonumber
& + \eps^{1/2} \int_0^t \int_{\R^d}  r^\eps(x,s)  \nabla \phi(x) \nabla_z \chi^0 \Big(\frac x\eps,\xi_\frac s{\eps}\Big)dx dB_{s}\\ \nonumber
& +   \eps^{1/2} \int_0^t \int_{\R^d}   \Theta^\eps\Big(\frac x\eps,\xi_\frac s{\eps} ,x, s \Big) \nabla \phi(x) \nabla_z \chi^0 \Big(\frac x\eps,\xi_\frac s{\eps}\Big)dx ds\\ \nonumber
& +  \eps  \int_0^t \int_{\R^d} r^\eps(x,s) \chi^0\Big(\frac x\eps,\xi_\frac s{\eps}\Big)\nabla^3 \phi(x) {\rm a}\Big(\frac x\eps,\xi_\frac s{\eps}\Big) dx ds \\ \nonumber
& + \eps \int_0^t \int_{\R^d} \nabla \phi(x,s) \chi^0\Big(\frac x\eps,\xi_\frac s{\eps}  \Big)\Theta^\eps\Big(\frac x\eps,\xi_\frac s{\eps} ,x, s \Big)dx \,dB_{s}
\end{align}
since from the very definition of $\chi^0$, the two terms  of order $\eps^{-1}$
\begin{eqnarray*}
&& \eps^{-1} \int_0^t \int_{\R^d} (\mathrm{div}_z {\rm a}) \Big(\frac x\eps,\xi_\frac s{\eps}\Big) \nabla \phi(x)  r^\eps(x,s)dx ds \\
& + & \eps^{-1}  \int_0^t \int_{\R^d} \nabla \phi(x)  \mathrm{div}_z \Big[{\rm a}\Big(\frac x\eps,\xi_\frac s{\eps}\Big)  \nabla_z \chi^0 \Big(\frac x\eps,\xi_\frac s{\eps}\Big) \Big] r^\eps(x,s) dx ds
\end{eqnarray*}
are equal to zero.

Using BDG inequality and the estimate \eqref{eq:bound_sup} we deduce that there exists $C> 0$ such that for any $0\leq t \leq \tau \leq T$
$$\mE \left[ \sup_{s\in[t,\tau]}  | \widehat v^\eps_s - \widehat v^\eps_t  | \right]  = C \sqrt{\tau-t}+ C \eps^{1/2}.$$
Hence the sequence $\widehat v^\eps$ is tight in $C(0,T;\mathbb R)$, that is $r^\eps$ is tight in $C(0,T;L^2_w(\R^d))$.

\medskip

For any $i=1,\ldots ,n$, since $\langle \mathcal L_y \chi^0\rangle =\langle (\nabla_y \chi^0)^i \rangle= 0$, we can define $\zeta^{0,i}$ such that ${\rm div}_z \zeta^{0,i}  = (\nabla_y \chi^0)^i$ and ${\rm div}_z \widehat \zeta^0 =  \mathcal L_y \chi^0$ and we have
\begin{align*}
&  \int_0^t \int_{\R^d} \phi(x,s) \Theta^\eps\Big(\frac x\eps,\xi_\frac s{\eps} ,x, s \Big)dx \,dB_{s}\\
& = \int_0^t \int_{\R^d} \phi(x,s)  \nabla_y\chi^0 \Big(\frac x\eps,\xi_\frac s{\eps}\Big)\nabla u^0(x,s)dx \sigma(\xi_\frac s{\eps}) \,dB_{s} \\
& =\eps  \int_0^t \int_{\R^d} \phi(x,s) {\rm div } \zeta^0 \Big(\frac x\eps,\xi_\frac s{\eps}\Big)\nabla u^0(x,s)dx \sigma(\xi_\frac s{\eps}) \,dB_{s} \\
& = - \eps  \int_0^t \int_{\R^d} \zeta^0 \Big(\frac x\eps,\xi_\frac s{\eps}\Big) \nabla \left( \phi(x,s)\nabla u^0(x,s) \right) dx \sigma(\xi_\frac s{\eps}) \,dB_{s}
\end{align*}
and
\begin{align*}
&  \int_0^t \int_{\R^d}  r^\eps(x,s)  \nabla \phi(x) \mathcal L_y \chi^0 \Big(\frac x\eps,\xi_\frac s{\eps}\Big)dx ds \\
& =\eps \int_0^t \int_{\R^d}  r^\eps(x,s)  \nabla \phi(x) {\rm div }\widehat \zeta^0 \Big(\frac x\eps,\xi_\frac s{\eps}\Big)dx ds \\
& = - \eps \int_0^t \int_{\R^d}  \nabla (r^\eps(x,s)  \nabla \phi(x)) \widehat \zeta^0 \Big(\frac x\eps,\xi_\frac s{\eps}\Big)dx ds.
\end{align*}
From \eqref{eq:H1_boundedness_r_eps}, these two quantities converge to zero. Therefore every term in \eqref{eq:dyn_hat_v_eps}, except for the first one, converges to zero.

For the first one, we have
\begin{align*}
& \int_0^t \int_{\R^d} r^\eps(x,s)  \widehat{\rm a}^0 \Big(\frac x\eps,\xi_\frac s{\eps}\Big) \nabla^2 \phi(x)   dx ds\\
& = \int_0^t \int_{\R^d} r^\eps(x,s) \aeff \nabla^2 \phi(x)   dx ds +  \int_0^t \int_{\R^d} r^\eps(x,s) \left(  \langle  {\rm a}\rangle^0 (\xi_\frac s{\eps}) - \aeff \right) \nabla^2 \phi(x)   dx ds \\
& +  \int_0^t \int_{\R^d} r^\eps(x,s) \left(  \widehat{\rm a}^0 \Big(\frac x\eps,\xi_\frac s{\eps}\Big) -  \langle  {\rm a}\rangle^0 (\xi_\frac s{\eps})  \right) \nabla^2 \phi(x)   dx ds ,
\end{align*}
where $\langle {\rm a}\rangle^0$ is defined by \eqref{eq:angle_a_0}. Since
$$\left\langle  \widehat{\rm a}^0 \Big(\frac x\eps,\xi_\frac s{\eps}\Big) -  \langle  {\rm a}\rangle^0 (\xi_\frac s{\eps}) \right\rangle = 0,$$
the last part converges to zero. Moreover by definition of $\aeff$, we also obtain the convergence to zero of the penultimate term. Hence \eqref{eq:dyn_hat_v_eps} becomes:
\begin{align*}
\widehat v^\eps_t &  = \int_{\R^d} r^\eps(x,t) \left( \phi(x) + \eps \nabla \phi(x) \chi^0 \Big(\frac x\eps,\xi_\frac t{\eps}\Big)  \right) dx\\
& = \int_0^t \int_{\R^d} r^\eps(x,s) \aeff \nabla^2 \phi(x)   dx ds \\
& +  \int_0^t \int_{\R^d} r^\eps(x,s) \left(  \langle  {\rm a}\rangle^0 (\xi_\frac s{\eps}) - \aeff \right) \nabla^2 \phi(x)   dx ds \\
& +  \int_0^t \int_{\R^d} r^\eps(x,s) \left(  \widehat{\rm a}^0 \Big(\frac x\eps,\xi_\frac s{\eps}\Big) -  \langle  {\rm a}\rangle^0 (\xi_\frac s{\eps})  \right) \nabla^2 \phi(x)   dx ds + O(\eps^{1/2}).
\end{align*}
Here $O(\eps^{1/2})$ stands for functions whose  $L^2(\Omega ; L^\infty(0,T))$ norm is bounded by a constant times $\eps^{1/2}$. On the right-hand side, the last two integrals converge to zero.
Hence we have proved that the sequence $r^\eps$ converges in probability in $L^2(0,T;L^2_w(\R^d)) \cap L^2(0,T;L^2_{loc}(\R^d)) $ to the unique solution $r^0$ of the PDE \eqref{eff_cauch} with initial value zero. Hence $r^0 = 0$.

\bigskip
To finish the proof of Lemma \ref{lmm:alpha=1}, we now show that the convergence holds in $L^2(0,T;L^2(\R^d))$.
Define a non-negative function $\theta_R \in C^\infty(\R^d)$ equal to zero on $\{|x| \leq R\}$ and equal to one on $\{|x| \geq 3R\}$ and such that $\|\nabla \theta_R\| \leq 1/R$. For
$$\theta_R^\eps (x,t) = \theta_R(x) + \eps \nabla \theta_R(x) \chi^0 \Big(\frac x\eps,\xi_\frac t{\eps}\Big)  $$
we have
\begin{align*}
& d (r^\eps\displaystyle(x,t) \theta^\eps_R(x)) -\mathrm{div}
\Big[{\rm a}\Big(\frac x\eps,\xi_\frac t{\eps}\Big)\nabla (r^\eps \theta^\eps_R) \Big]\,dt  \\
& = \theta^\eps_R(x) \nabla_y\chi^0 \Big(\frac x\eps,\xi_\frac t{\eps}\Big)\nabla u^0(x,t) \sigma(\xi_\frac t{\eps}) \,dB_t \\
& - 2 a\Big(\frac x\eps,\xi_\frac s{\eps}\Big) \nabla r^\eps(x,s) \nabla \theta_R(x) dt  \\
& - r^\eps(x,s) \left[ a + a \nabla\chi^0 + {\rm div } (a\chi^0) \right]\Big(\frac x\eps,\xi_\frac s{\eps}\Big)  \nabla^2 \theta_R (x) dt  \\
& + r^\eps(x,s)  \nabla \theta_R(x) \mathcal L_y \chi^0 \Big(\frac x\eps,\xi_\frac s{\eps}\Big) dt \\
& +  \eps^{1/2}   r^\eps(x,s)  \nabla \theta_R(x) \nabla \chi^0 \Big(\frac x\eps,\xi_\frac s{\eps}\Big) dB_{t} \\
& +   \eps^{1/2}   \Theta^\eps\Big(\frac x\eps,\xi_\frac s{\eps} ,x, s \Big) \nabla \theta_R(x) \nabla \chi^0 \Big(\frac x\eps,\xi_\frac s{\eps}\Big)dt \\
& - \eps r^\eps(x,s) a \Big(\frac x\eps,\xi_\frac s{\eps}\Big)\chi^0\Big(\frac x\eps,\xi_\frac s{\eps}\Big) \nabla^3 \theta_R(x) dt.
\end{align*}
If we apply It\^o's formula to
$$v^R(t) =  \| r^\eps\displaystyle(\cdot,t) \theta^\eps_R(\cdot) \|^2_{L^2(\R^d)}$$
then
\begin{align*}
v^R(t)& +2 \int_0^t \int_{\R^d} \nabla (r^\eps(x,s)  \theta^\eps_R(x)) \Big[{\rm a}\Big(\frac x\eps,\xi_\frac t{\eps}\Big)\nabla (r^\eps(x,s)  \theta^\eps_R(x)) \Big]\,dx ds \\
&= 2  \int_0^t\int_{\R^d}  r^\eps(x,s)  \theta^\eps_R(x) \theta^\eps_R(x) \nabla_y\chi^0 \Big(\frac x\eps,\xi_\frac s{\eps}\Big)\nabla u^0(x,s) \sigma(\xi_\frac s{\eps}) dx\,dB_s \\
& -  4  \int_0^t\int_{\R^d} r^\eps(x,s)  \theta^\eps_R(x) a\Big(\frac x\eps,\xi_\frac s{\eps}\Big) \nabla r^\eps(x,s) \nabla \theta_R(x) dxds  \\
& -  2  \int_0^t\int_{\R^d} r^\eps(x,s)  \theta^\eps_R(x)r^\eps(x,s) \left[ a + a \nabla\chi^0 + {\rm div } (a\chi^0) \right]\Big(\frac x\eps,\xi_\frac s{\eps}\Big)  \nabla^2 \theta_R (x) dxds  \\
& +  2  \int_0^t\int_{\R^d} r^\eps(x,s)  \theta^\eps_R(x) r^\eps(x,s)  \nabla \theta_R(x) \mathcal L_y \chi^0 \Big(\frac x\eps,\xi_\frac s{\eps}\Big) dxds \\
& +  \int_0^t\int_{\R^d}  \theta^\eps_R(x)^2 \left\|  \nabla_y\chi^0 \Big(\frac x\eps,\xi_\frac s{\eps}\Big)\nabla u^0(x,s) \sigma(\xi_\frac s{\eps})  \right\|^2 dxds  + O(\eps^{1/2}).
\end{align*}
Since $u^0$ is a Schwartz class function and $\|\nabla \theta^\eps_R\| \leq 1/R$, the expectation of the right-hand side does not exceed $C \left( \dfrac{1}{R} + \sqrt{\eps} \right)$. It implies tightness in $L^2(0,T;L^2(\R^d))$.

\bigskip\noindent
{\bf Acknowledgements.}

\bibliographystyle{plain}
\bibliography{recherchebib}

\end{document}